\documentclass[12pt,a4paper]{article}

\usepackage[T1]{fontenc}
\usepackage{lmodern}
\usepackage{amsmath,amssymb,amsthm}
\usepackage[margin=28mm]{geometry}
\usepackage{microtype}
\usepackage{cite}
\usepackage[hidelinks]{hyperref}

\hypersetup{
  pdftitle={Local and 2-local derivations of Block Lie algebras B(q)},
  pdfauthor={Jakhongir Ruzimboev and Bakhtiyor B. Yusupov},
  pdfkeywords={Block Lie algebra, local derivation, 2-local derivation, Laurent polynomial}
}

\newcommand{\C}{\mathbb C}
\newcommand{\Z}{\mathbb Z}
\newcommand{\B}{\mathcal B}

\DeclareMathOperator{\Der}{Der}
\DeclareMathOperator{\LDer}{LDer}
\DeclareMathOperator{\ad}{ad}

\newtheorem{theorem}{Theorem}
\newtheorem{proposition}[theorem]{Proposition}
\newtheorem{lemma}[theorem]{Lemma}

\theoremstyle{definition}
\newtheorem{definition}[theorem]{Definition}

\numberwithin{equation}{section}
\title{\bfseries Local and 2-local derivations of Block Lie algebras $\B(q)$}
\author{
Jakhongir Ruzimboev\thanks{New Uzbekistan University, Tashkent, Uzbekistan;
Tashkent International University of Financial Management
and Technologies, Tashkent, Uzbekistan.
Email: \href{mailto:jahongirrozimboyev5@gmail.com}{\nolinkurl{jahongirrozimboyev5@gmail.com}}.}
\and
Bakhtiyor Yusupov\thanks{V.I. Romanovskiy Institute of Mathematics,
Uzbekistan Academy of Sciences, Tashkent, Uzbekistan; Department of Applied Mathematics and Mathematical Physics, Urgench State University, Urgench, Uzbekistan.
Email: \href{mailto:baxtiyor_yusupov_93@mail.ru}{\nolinkurl{baxtiyor_yusupov_93@mail.ru}}.}
}

\date{}

\begin{document}

\maketitle

\begin{abstract}
We prove that every local or 2-local derivation of the Block
Lie algebra $\B(q)$, with both basis indices in $\Z$, is a
derivation for all $q\in\C$. The proofs use the known description
of ordinary derivations, Laurent polynomials, and coefficient
comparisons.
\end{abstract}

\noindent\textbf{Keywords:} Block Lie algebra, local derivation,
2-local derivation, Laurent polynomial.

\smallskip
\noindent\textbf{MSC2020:} 17B40, 17B65, 17B68.

\section{Introduction}

Local derivations were introduced by Kadison~\cite{Kadison} and
independently studied by Larson and Sourour~\cite{LarsonSourour}.
A local derivation is a linear map that agrees at each element
with a derivation which may depend on that element.
In 1997, \v{S}emrl~\cite{Semrl} introduced 2-local derivations.
For such a map, each pair of elements admits one derivation
that agrees with the map at both elements; the map itself
is not assumed to be linear.
Both definitions lead to the question of when these local
conditions imply that the map is a derivation on the whole
algebra.

For finite-dimensional Lie algebras, Ayupov and
Kudaybergenov~\cite{AyupovKudaybergenov} proved that every
local derivation of a semisimple Lie algebra is a derivation.
They also gave examples of nilpotent Lie algebras admitting
local derivations which are not derivations.
The corresponding result for 2-local derivations of
semisimple Lie algebras was established by Ayupov,
Kudaybergenov and Rakhimov
\cite{AyupovKudaybergenovRakhimov}.
Alauadinov and Yusupov studied local derivations of
Schr\"odinger algebras
\cite{AlauadinovYusupovSchrodinger,
AlauadinovYusupovHigherSchrodinger}
and conformal Galilei algebras
\cite{AlauadinovYusupovGalilei}.
Local and 2-local derivations of solvable Leibniz algebras
were studied in~\cite{AyupovKhudoyberdiyevYusupov}.

For infinite-dimensional Lie algebras, Chen, Zhao and
Zhao~\cite{ChenZhaoZhao} proved that every local derivation
of a Witt algebra is a derivation.
Results on 2-local derivations of Witt algebras were obtained
by Yusupov~\cite{YusupovWitt}, Ayupov and
Yusupov~\cite{AyupovYusupov}, and Zhao, Chen and
Zhao~\cite{ZhaoChenZhaoTwoLocal}.
Related results for generalized Witt algebras and locally
simple Lie algebras were proved in
\cite{AyupovKudaybergenovYusupovWitt,
AyupovKudaybergenovYusupovSimple}.
Ruzimboev and Boranbaev~\cite{RuzimboevBoranbaev}
proved that every local superderivation of the modified
Witt-type Lie superalgebra is a superderivation.

Block introduced a class of infinite-dimensional Lie algebras
in~\cite{Block}.
Biderivations and transposed Poisson structures on Block Lie
algebras have been studied in
\cite{LiuGuoZhao,KaygorodovKhrypchenko}.
Some local and 2-local results are already known for
algebras related to the family $\mathcal B(q)$.
For $q=1$, Wu and Yang~\cite{WuYang} proved that every
local derivation of the subalgebra spanned by
$\{L_{m,i}:m\in\mathbb Z,\ i\geq0\}$ is a derivation.
Chen and Guo~\cite{ChenGuoBlockTwoLocal} proved the
corresponding result for 2-local derivations in an
isomorphic presentation of this subalgebra.
Both works restrict the second basis index to
nonnegative integers.

The zero parameter is related to the Virasoro-like algebra.
Indeed, $[\mathcal B(0),\mathcal B(0)]$ is the
Virasoro-like algebra, and $\mathcal B(0)$ is its direct
sum with the one-dimensional centre $\mathbb C L_{0,0}$.
Tang, Xiao and Wang~\cite{TangXiaoWangVirasoroLike}
proved that every 2-local derivation of the
Virasoro-like algebra is a derivation.

In this paper, we consider $\mathcal B(q)$ with both
basis indices in $\mathbb Z$ and with an arbitrary
complex parameter $q$.
We prove that every local derivation and every
2-local derivation of $\mathcal B(q)$ is a derivation.
The proofs use Laurent polynomials, coefficient comparisons,
and the description of ordinary derivations in
\cite{KaygorodovKhudoyberdiyevShermatova}.
We treat the cases $q\notin\mathbb Z$,
$q\in\mathbb Z\setminus\{0\}$ and $q=0$ separately.
Section~\ref{sec:local} deals with local derivations, and
Section~\ref{sec:bq-two-local} proves the result for
2-local derivations.

\section{Preliminaries}\label{sec:preliminaries}

All vector spaces and algebras are over $\C$.
All sums indexed by basis elements are finite.
We write $\delta_{a,b}$ for the Kronecker symbol and
$\C^*=\C\setminus\{0\}$.

A derivation of a Lie algebra $\mathfrak g$ is a linear map
$D:\mathfrak g\to\mathfrak g$ satisfying
\begin{equation*}
D([x,y])=[D(x),y]+[x,D(y)],
\qquad x,y\in\mathfrak g.
\end{equation*}
The space of derivations is denoted by $\Der(\mathfrak g)$.
For $u\in\mathfrak g$, the inner derivation $\ad u$ is
defined by $\ad u(x)=[u,x]$.

\begin{definition}\label{def:local-derivation}
A linear map $\Delta:\mathfrak g\to\mathfrak g$ is called a
\emph{local derivation} if, for every $x\in\mathfrak g$,
there exists $D_x\in\Der(\mathfrak g)$ such that
$\Delta(x)=D_x(x)$.
The space of local derivations is denoted by
$\LDer(\mathfrak g)$.
\end{definition}

\begin{definition}\label{def:two-local-derivation}
A map $\Delta:\mathfrak g\to\mathfrak g$, not necessarily
linear, is called a \emph{2-local derivation} if, for every
$x,y\in\mathfrak g$, there exists
$D_{x,y}\in\Der(\mathfrak g)$ such that
\begin{equation*}
D_{x,y}(x)=\Delta(x),\qquad
D_{x,y}(y)=\Delta(y).
\end{equation*}
\end{definition}

Every 2-local derivation is homogeneous.
Indeed, applying the definition to $(x,\lambda x)$ gives
\begin{equation*}
\Delta(\lambda x)
=D_{x,\lambda x}(\lambda x)
=\lambda D_{x,\lambda x}(x)
=\lambda\Delta(x),
\qquad \lambda\in\C.
\end{equation*}
In particular, $\Delta(0)=0$.

If $\Delta$ is a local or 2-local derivation and
$D\in\Der(\mathfrak g)$, then $\Delta-D$ is a map of the
same kind. This follows by using $D_x-D$ or $D_{x,y}-D$,
respectively.
Moreover, an isomorphism $\Phi:\mathfrak g\to\mathfrak h$
preserves derivations, local derivations and 2-local
derivations under conjugation $T\mapsto\Phi T\Phi^{-1}$.

For $q\in\C$, the Block Lie algebra $\B(q)$ has basis
$\{L_{m,i}:m,i\in\Z\}$ and bracket
\begin{equation}\label{eq:block-bracket}
[L_{m,i},L_{n,j}]
=\bigl(n(i+q)-m(j+q)\bigr)L_{m+n,i+j},
\qquad m,n,i,j\in\Z.
\end{equation}
Define the linear maps $D_1,D_2$ by
\begin{equation*}
D_1(L_{m,i})=iL_{m,i},
\qquad
D_2(L_{m,i})=mL_{m,i}.
\end{equation*}
For $q\in\Z$, also define
\begin{equation*}
\begin{aligned}
D_3(L_{m,i})&=mL_{m,i-q},\\
D_4(L_{m,i})&=(i+q)L_{m,i-q},\\
D_5(L_{m,i})&=\delta_{m,0}\delta_{i,-2q}L_{0,-q}.
\end{aligned}
\end{equation*}
When $q=0$, we have $D_3=D_2$ and $D_4=D_1$.
We use the following description of the derivation algebra;
see \cite[Proposition~5]{KaygorodovKhudoyberdiyevShermatova}.

\begin{proposition}\label{prop:derivations}
For every $q\in\C$, we have the following direct sum decomposition
as vector spaces:
\begin{equation*}
\Der(\B(q))=
\begin{cases}
\ad(\B(q))\oplus\C D_1,
    &q\notin\Z,\\
\ad(\B(q))\oplus\C D_1\oplus\C D_3
    \oplus\C D_4\oplus\C D_5,
    &q\in\Z\setminus\{0\},\\
\ad(\B(0))\oplus\C D_1\oplus\C D_2\oplus\C D_5,
    &q=0.
\end{cases}
\end{equation*}
\end{proposition}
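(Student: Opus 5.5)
The approach I would take is the standard graded-derivation method for $\Z^2$-graded Lie algebras. Give $\B(q)$ the grading $\deg L_{m,i}=(m,i)$. First I would show that every $D\in\Der(\B(q))$ decomposes as $D=\sum_{(a,b)\in\Z^2}D_{(a,b)}$. Here $D_{(a,b)}(L_{m,i})$ is the $(m+a,i+b)$-component of $D(L_{m,i})$, each $D_{(a,b)}$ is again a derivation, and for every $x$ only finitely many $D_{(a,b)}(x)$ are nonzero. A homogeneous derivation of degree $(a,b)$ has the form $D(L_{m,i})=f(m,i)L_{m+a,i+b}$. Applying it to \eqref{eq:block-bracket} gives the functional equation
\begin{equation*}
\bigl(n(i+q)-m(j+q)\bigr)f(m+n,i+j)
=\bigl(n(i+b+q)-(m+a)(j+q)\bigr)f(m,i)
+\bigl((n+a)(i+q)-m(j+b+q)\bigr)f(n,j).
\end{equation*}
The work then splits into solving this equation for each degree, and afterwards showing that $D$ differs from a finite sum of inner derivations only by a combination of $D_1,\dots,D_5$.

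For $q\neq0$, I would exploit the element $L_{0,0}$, for which $\ad L_{0,0}=qD_2$ acts diagonally. Suppose $a\neq0$. Subtracting $\ad\bigl(\lambda L_{a,b}\bigr)$ with a suitable $\lambda$ kills $D_{(a,b)}(L_{0,0})$. Applying the derivation to $[L_{0,0},L_{n,j}]=nqL_{n,j}$ then yields $aq\,f(n,j)=0$, so $D_{(a,b)}$ is inner. The same argument, with the relation $\sum_{(a,b)}D_{(a,b)}(L_{0,0})=D(L_{0,0})$ being a finite sum, shows that only finitely many $\lambda$'s are nonzero. For degrees $(0,b)$, I would use the elements $L_{0,j}$ with $[L_{0,j},L_{n,k}]=n(j+q)L_{n,j+k}$ together with $L_{\pm1,0}$ to reduce $f$ to a function determined by few values. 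The equation should force $f$ to be a combination of $i$ and $m$ when $b=0$; the $m$-part is absorbed by $\ad L_{0,0}$, which leaves $D_1$. For $b\neq0$, $f$ should be a multiple of $(i+q)$ or $m$ up to inner terms, and a nonzero non-inner solution is possible only if $b=-q$. That case requires $q\in\Z$ and produces $D_3,D_4$. The isolated degree $(0,-2q)$ supported at $L_{0,-2q}$ gives $D_5$, because $L_{0,-2q}\notin[\B(q),\B(q)]$: its coefficient in any bracket is $n(i+q)-m(j+q)$ with $m+n=0$ and $i+j=-2q$, which vanishes.

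For $q=0$, the element $L_{0,0}$ is central, so the trick above fails. Instead I would use $L_{1,0}$ and $L_{0,1}$, where $\ad L_{0,1}$ multiplies $L_{n,j}$ by $n$ and shifts $j$. The exceptional solutions are then $D_2$ (no longer inner), $D_1$, and $D_5$ onto the centre from the non-commutator $L_{0,0}$. Finally I would check that the listed sums are direct. This holds because $D_1,\dots,D_5$ have distinct degrees or are not inner, as seen by evaluating on $L_{0,0}$ and $L_{0,-2q}$.

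I expect the main obstacle to be the degree-$(0,b)$ analysis, especially pinning down that no solutions exist for $b\notin\{0,-q,-2q\}$, together with the bookkeeping showing the infinite sum of homogeneous components is in fact a finite sum modulo the listed maps. For the first, I would substitute $m=0$ and $n=\pm1$ into the functional equation to get a recursion in $i$, and then compare coefficients as polynomials in $n$.
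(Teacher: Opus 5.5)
The paper gives no proof of this proposition; it quotes it from Proposition~5 of Kaygorodov--Khudoyberdiyev--Shermatova. Your graded-component strategy is the natural self-contained route, and much of it is sound. For $q\neq0$, normalising with $\ad L_{0,0}=qD_2$ shows that every component of degree $(a,b)$ with $a\neq0$ equals $\lambda_{a,b}\ad L_{a,b}$. Finiteness of the $\lambda_{a,b}$ then follows from $D(L_{0,0})$, as you say.

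\textbf{The error.} One intermediate claim in your degree-$(0,b)$ analysis is false. $D_5$ sends $L_{0,-2q}$ to $L_{0,-q}$, so in your convention it is homogeneous of degree $(0,q)$. The pair $(0,-2q)$ is only the index of the basis vector on which it is supported. Hence your claim that for $b\neq0$ a non-inner solution exists only when $b=-q$ fails at $b=q$. There $f(m,i)=\delta_{m,0}\delta_{i,-2q}$ is a non-inner solution, and it is not a multiple of $m$ or $i+q$ modulo inner terms. The exceptional degrees you plan to isolate are therefore $\{0,-q,q\}$, not $\{0,-q,-2q\}$. Any attempt to show that degree $(0,q)$ carries only inner derivations must fail, and at $b=-2q$ you would be looking for a solution that does not exist.

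\textbf{Where $D_5$ comes from.} With $m=0$ and $n\neq0$, your functional equation becomes
\[
(i+q)\bigl(f(n,i+j)-f(n,j)\bigr)=(i+b+q)\,f(0,i).
\]
Its right-hand side vanishes identically at $i=-b-q$. So the recursion in $i$ leaves $f(0,-b-q)$, the coefficient of $L_{0,-b-q}\mapsto L_{0,-q}$, undetermined. It is constrained only by the relation with $n=-m\neq0$ and $i+j=-b-q$, namely
\[
-m(q-b)\,f(0,-b-q)=-mq\bigl(f(m,i)+f(-m,j)\bigr).
\]
This fixes $f(0,-b-q)$ precisely when $b\neq q$; at $b=q$ the free parameter is $D_5$.

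The other exceptional degrees arise by a different mechanism. Once the recursion gives $f(m,i)=h(m)+\gamma i$ for $m\neq0$, the relations with $m,n,m+n\neq0$ force $\gamma b(b+q)=0$. This yields $D_1$ at $b=0$, and $D_3,D_4$ at $b=-q$, where $\ad L_{0,-q}=0$.

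\textbf{Remaining steps.} With this correction your plan goes through, subject to the following.
\begin{itemize}
\item The finiteness you list as an obstacle is easy. For $b\notin\{0,\pm q\}$, the coefficient of $L_{1,b}$ in $D(L_{1,0})$ is $\mu_b(b+q)$. For $q=0$, use $\ad L_{a,b}(L_{1,0})=bL_{a+1,b}$ and $\ad L_{a,b}(L_{0,1})=-aL_{a,b+1}$.
\item For directness, note that $D_3$ and $D_4$ have the same degree. You therefore need that degree $(0,-q)$ has no nonzero inner derivations, because $L_{0,-q}$ is central. You also need that $D_3,D_4$ are independent, for example because $D_3(L_{0,0})=0\neq qL_{0,-q}=D_4(L_{0,0})$.
\item For $q=0$, you only announce that homogeneous derivations of nonzero degree are inner. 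That still needs an actual argument, or a reference to the known derivation algebra of the Virasoro-like algebra $[\B(0),\B(0)]$.
\end{itemize}
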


Identify the vector space $\B(q)$ with the Laurent polynomial
algebra $R=\C[z^{\pm1},t^{\pm1}]$ by $L_{m,i}=z^mt^i$.
Put $E_z=z\partial_z$ and $E_t=t\partial_t$.
Thus
\begin{equation*}
1=L_{0,0},\quad z=L_{1,0},\quad t=L_{0,1},
\qquad
E_zL_{m,i}=mL_{m,i},\quad E_tL_{m,i}=iL_{m,i}.
\end{equation*}
In this notation, the Lie bracket is
\begin{equation}\label{eq:laurent-bracket}
[f,g]_q=(E_tf+qf)E_zg-(E_zf)(E_tg+qg),
\qquad f,g\in R.
\end{equation}
Applying the right-hand side to two monomials gives
\eqref{eq:block-bracket}.
In particular, $D_1=E_t$ and $D_2=E_z$.

For $q\in\Z$, let $\ell_q:R\to\C$ be the linear functional
which extracts the coefficient of $z^0t^{-2q}$.
Then
\begin{equation*}
D_3(f)=t^{-q}E_zf,\qquad
D_4(f)=t^{-q}(E_tf+qf),\qquad
D_5(f)=\ell_q(f)t^{-q}.
\end{equation*}
The element $t^{-q}=L_{0,-q}$ is central by
\eqref{eq:block-bracket}.

For $p=(a,b)\in(\C^*)^2$, Laurent polynomials and their
formal partial derivatives can be evaluated at $p$.
We write $f_z=\partial_zf$ and $f_t=\partial_tf$; hence
$(E_zf)(p)=af_z(p)$ and $(E_tf)(p)=bf_t(p)$.
The notation $T(f)(p)$ means that we first apply $T$ to $f$
and then evaluate the resulting Laurent polynomial at $p$.

We will use two elementary facts: a Laurent polynomial in $t$
which vanishes at every $b\in\C^*$ is zero, and a Laurent
polynomial in $z,t$ which vanishes on $(\C^*)^2$ is zero.
Both statements follow by multiplying by suitable powers
of the variables and applying the corresponding facts for
ordinary polynomials.

\section{Local derivations of
\texorpdfstring{$\B(q)$}{B(q)}}\label{sec:local}

In this section, we study local derivations of $\B(q)$
in the three cases $q\notin\Z$, $q\in\Z\setminus\{0\}$,
and $q=0$.

\subsection{The case
\texorpdfstring{$q\notin\Z$}{q not in Z}}
\label{subsec:nonintegral}

Throughout this subsection, $q\in\C\setminus\Z$.
In particular, $q\ne0,-1$ and $k+q\ne0$ for all $k\in\Z$.
Proposition~\ref{prop:derivations} gives
\begin{equation}\label{eq:nonintegral-action}
\begin{aligned}
D(f)&=[u,f]_q+dE_tf\\
&=(E_tu+qu)E_zf+(d-E_zu)E_tf-q(E_zu)f
\end{aligned}
\end{equation}
for every derivation $D$, with $u\in R$ and $d\in\C$.

\begin{lemma}\label{lem:nonintegral-normalization}
For every $\Delta\in\LDer(\B(q))$, there exist $u\in R$
and $\kappa\in\C$ such that the local derivation
$T=\Delta-\ad u-\kappa D_1$ satisfies $T(1)=T(t)=0$.
\end{lemma}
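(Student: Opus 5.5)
The plan is to normalize in two stages: first kill $\Delta(1)$ with an inner derivation, then kill $\Delta(t)$ with a multiple of $D_1$. The second stage needs one extra ingredient, namely the local condition at the elements $1+\lambda t$.

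\emph{Step 1: normalizing at $1$.} By \eqref{eq:nonintegral-action}, with $f=1$ we have $E_z1=E_t1=0$, so every derivation satisfies $D(1)=-q\,E_zu$. Hence $\Delta(1)=-q\,E_zu_1$ for some $u_1\in R$. In particular $\Delta(1)$ has no $z^0$-terms, so we can write $\Delta(1)=E_zg$. Since $q\neq0$, we take $u=-q^{-1}g$ (adjusted by any $z$-free term if convenient) and set $T_1=\Delta-\ad u$. Then $T_1$ is a local derivation, by the remark in the Preliminaries, and $T_1(1)=0$.

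\emph{Step 2: the shape of $T_1(t)$.} For $f=t$ we have $E_zt=0$ and $E_tt=t$. So a derivation with data $(v,d)$ gives
\[
D(t)=d\,t-(1+q)\,t\,E_zv,
\qquad
D(1+\lambda t)=-q\,E_zv+\lambda d\,t-\lambda(1+q)\,t\,E_zv .
\]
Apply the local condition to $x=t$. This gives $T_1(t)=d_0t-(1+q)tE_zv_0$ for some $v_0\in R$ and $d_0\in\C$. We cannot remove the term $tE_zv_0$ with $\ad v_0$, because $\ad v_0(1)=-qE_zv_0$ would destroy $T_1(1)=0$. Moreover, the inner derivations preserving $T_1(1)=0$ are exactly the $\ad w$ with $E_zw=0$, and these satisfy $\ad w(t)=0$. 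So the main point, and the step I expect to be the obstacle, is to show that $E_zv_0=0$.

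\emph{Step 3: $E_zv_0=0$ by linearity.} For each $\lambda\in\C^*$, apply the local condition to $1+\lambda t$. Linearity and $T_1(1)=0$ give $T_1(1+\lambda t)=\lambda T_1(t)$. This yields some $v_\lambda\in R$ and $d_\lambda\in\C$ with
\[
\lambda d_0t-\lambda(1+q)tE_zv_0
=-\bigl(q+\lambda(1+q)t\bigr)E_zv_\lambda+\lambda d_\lambda t .
\]
Compare the parts with nonzero $z$-degree; the $E_z$-terms have no $z^0$ component. This gives
\[
\lambda(1+q)\,t\,E_zv_0=\bigl(q+\lambda(1+q)t\bigr)E_zv_\lambda .
\]
Since $q\ne0,-1$, the elements $q+\lambda(1+q)t$ are pairwise non-associate primes of the UFD $R$, and none of them is a unit. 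Each of them divides $E_zv_0$, because $\lambda(1+q)t$ is a unit. A nonzero Laurent polynomial has only finitely many prime factors up to units, so $E_zv_0=0$. (Alternatively, evaluate at $t=-q/(\lambda(1+q))$ and let $\lambda$ vary, using the elementary vanishing facts from the Preliminaries.)

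\emph{Step 4: conclusion.} Now $T_1(t)=d_0t$. Set $\kappa=d_0$ and $T=T_1-\kappa D_1=\Delta-\ad u-\kappa D_1$. Then $T(1)=0-\kappa\cdot0=0$ and $T(t)=d_0t-d_0t=0$, which proves Lemma~\ref{lem:nonintegral-normalization}.
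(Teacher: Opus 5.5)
Your proof is correct and follows essentially the paper's route. You kill $\Delta(1)$ with an inner derivation, then use locality along the family $1+\lambda t$ to force the nonzero $z$-degree part of $T_1(t)$ to vanish, and finally subtract $\kappa D_1$; the paper uses the family $t+\lambda 1$, which is the same up to rescaling. The differences are cosmetic: your preliminary use of locality at $t$ itself is not needed, and your UFD prime-factor argument stands in for the paper's evaluation at $t=b$ with $\lambda$ chosen to make the linear factor vanish, which you already note as an alternative.
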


\begin{proof}
Locality at $1$ and \eqref{eq:nonintegral-action} give
$\Delta(1)=[u,1]_q$ for some $u\in R$.
Hence $\Delta_0=\Delta-\ad u$ satisfies $\Delta_0(1)=0$.

For each $\lambda\in\C^*$, apply locality to $t+\lambda1$.
Since $\Delta_0(t+\lambda1)=\Delta_0(t)$,
formula~\eqref{eq:laurent-bracket} gives
\begin{equation*}
\Delta_0(t)
=-\bigl((q+1)t+q\lambda\bigr)E_zv_\lambda+d_\lambda t.
\end{equation*}
Here $v_\lambda\in R$ and $d_\lambda\in\C$ may depend on
$\lambda$. Group terms by powers of $z$:
\begin{equation*}
\Delta_0(t)=\sum_rz^rF_r(t),
\qquad
v_\lambda=\sum_rz^rU_{r,\lambda}(t).
\end{equation*}
The $F_r$ are fixed, independently of $\lambda$.
Since $E_z$ multiplies the $z^r$ term by $r$,
comparison gives
\begin{equation}\label{eq:nonintegral-parameter-factor}
\begin{aligned}
F_r(t)&=-r\bigl((q+1)t+q\lambda\bigr)U_{r,\lambda}(t),
&&r\ne0,\\
F_0(t)&=d_\lambda t.
\end{aligned}
\end{equation}
For any $b\ne0$, take $\lambda=-(q+1)b/q$.
This is nonzero, and evaluation at $t=b$ gives $F_r(b)=0$
for $r\ne0$. Therefore $F_r=0$ whenever $r\ne0$.
It follows that $\Delta_0(t)=\kappa t$ for some $\kappa\in\C$.
Subtracting $\kappa D_1$ proves the assertion.
\end{proof}

\begin{lemma}[Action on the basis]
\label{lem:nonintegral-action}
Let $T\in\LDer(\B(q))$ satisfy $T(1)=T(t)=0$.
Then there exist coefficients $c_{r,k}\in\C$,
only finitely many of which are nonzero, such that
\begin{equation}\label{eq:nonintegral-basis}
T(L_{m,i})=m\sum_{r,k}c_{r,k}L_{m+r,i+k},
\qquad m,i\in\Z.
\end{equation}
The coefficients $c_{r,k}$ are independent of $m$ and $i$.
\end{lemma}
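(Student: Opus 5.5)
The plan is to show that there is a single Laurent polynomial $w=\sum_{r,k}c_{r,k}z^rt^k$ with $T(f)=w\,E_zf$ for every monomial $f$. This is exactly \eqref{eq:nonintegral-basis}. Throughout, I use \eqref{eq:nonintegral-action} in the form
\begin{equation*}
D(f)=A\,E_zf-B\,(E_tf+qf)+d\,E_tf,\qquad A=E_tu+qu,\quad B=E_zu,
\end{equation*}
together with the linearity of $T$, which lets me perturb the argument by elements that $T$ kills.

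\textbf{Step 1: $T(t^i)=0$ for all $i$.} Fix $i$ and apply locality to $x=t^i+\lambda 1+\mu t$. Since $T(1)=T(t)=0$, we get $T(t^i)=T(x)=-B\,P_{\lambda,\mu}(t)+d(it^i+\mu t)$, where $P_{\lambda,\mu}=(i+q)t^i+q\lambda+(q+1)\mu t$. Here $u,d$ depend on $(\lambda,\mu)$.

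Expand in powers of $z$, exactly as in the proof of Lemma~\ref{lem:nonintegral-normalization}. For $r\neq0$ the $z^r$-component of $T(t^i)$ is $-rU_r(t)P_{\lambda,\mu}(t)$. Given $b\in\C^*$, take $\mu=0$ and choose $\lambda$ with $P_{\lambda,0}(b)=0$; this is possible because $q\ne0$. So that component vanishes at every $b$ and is therefore zero.

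The $z^0$-component equals $d\,it^i$ when $\mu=0$, and $d'(it^i+t)$ when $\mu=1$. For $i\ne1$, comparing the coefficients of $t$ forces $d'=0$, hence this component is $0$. The case $i=1$ is the hypothesis.

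\textbf{Step 2: the general monomial.} Now let $f=z^mt^i$ with $m\ne0$ (the case $m=0$ is Step~1). Apply locality to $x=f+g(t)$, where $g$ ranges over Laurent polynomials in $t$ alone; by Step~1, $T(x)=T(f)$. This gives
\begin{equation*}
T(f)=mfA-B\bigl((i+q)f+E_tg+qg\bigr)+d(if+E_tg).
\end{equation*}
I would choose $g$ to annihilate the $B$-term at prescribed points: $E_tg+qg$ can be any Laurent polynomial in $t$, since $k+q\ne0$. Then, comparing $z$-components and evaluating at points $(a,b)$, I would deduce the following.
\begin{itemize}
\item[(a)] $T(f)$ lies in the $R$-submodule generated by $f$, with a factor of $m$.
\item[(b)] The $z^m$-coefficient behaves like a multiple of $E_zf$, with the $d$-contribution vanishing.
\end{itemize}
This gives $T(f)=m\,f\,w_f$ for some Laurent polynomial $w_f$, possibly depending on $f$.

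\textbf{Step 3: independence of $w_f$.} The main obstacle is to show that $w_f$ does not depend on $f$. I would apply locality to sums $x=f_1+f_2+g(t)$ of two monomials $f_1=z^mt^i$ and $f_2=z^nt^j$. Linearity gives $T(x)=mf_1w_{f_1}+nf_2w_{f_2}$. On the other hand, $T(x)$ equals
\begin{equation*}
A(mf_1+nf_2)-B\bigl((i+q)f_1+(j+q)f_2+E_tg+qg\bigr)+d(\cdots)
\end{equation*}
for a single pair $(u,d)$.

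Evaluating at points $p=(a,b)$ where $g$ is chosen to kill the $B$-coefficient, I expect to get $w_{f_1}(p)=w_{f_2}(p)$, possibly after also using the variant with $f_2$ replaced by $\nu f_2$ for a scalar $\nu$. The fact that a Laurent polynomial vanishing on $(\C^*)^2$ is zero then gives $w_{f_1}=w_{f_2}$. Writing the common value $w=\sum c_{r,k}z^rt^k$ yields \eqref{eq:nonintegral-basis}.

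I expect the delicate points to be the control of the $d$-terms and the genericity choices of the points and parameters. These are the places where $q\notin\Z$, that is $k+q\ne0$, is used.
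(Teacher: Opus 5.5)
Your Step 1 is correct, but the proposal does not yet prove the lemma. Everything rests on Step 3, and that step is only announced, not carried out.

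Step 2 carries no content. A monomial $f=z^mt^i$ is a unit in $R$, so for $m\ne0$ the identity $T(f)=mf\,w_f$ holds trivially with $w_f=T(f)/(mf)$. Items (a) and (b) add nothing to this. The lemma is therefore exactly the claim that $w_f$ does not depend on $f$, and there you only say that you expect $w_{f_1}(p)=w_{f_2}(p)$.

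As you describe it, $g$ kills only the $B$-coefficient at $p$ and $\nu$ kills $(E_zx)(p)$. Evaluation then gives $mf_1(p)\bigl(w_{f_1}(p)-w_{f_2}(p)\bigr)=d_x\,(E_tx)(p)$, where $d_x$ is an unknown scalar depending on $x$, so nothing follows. The $d$-terms you flag as delicate are precisely the unresolved point.

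The missing idea is the one in the paper's proof. By \eqref{eq:nonintegral-action}, $D(x)(p)$ depends only on $x(p)$, $(E_zx)(p)$ and $(E_tx)(p)$, so the test element should have all three vanishing at $p$. Your ingredients suffice for this:
\begin{itemize}
\item For $m,n\ne0$, take $x=f_1+\nu f_2+\alpha+\beta t$ with $\nu=-mf_1(p)/(nf_2(p))$, so that $(E_zx)(p)=0$.
\item Choose $\beta$, then $\alpha$, so that $(E_tx)(p)=0$ and $x(p)=0$. This is possible because $b\ne0$.
\item Then $D(x)(p)=0$ for every derivation $D$.
\item Since $T(1)=T(t)=0$, we have $T(x)=T(f_1)+\nu T(f_2)$.
\item Locality now gives $0=mf_1(p)\bigl(w_{f_1}(p)-w_{f_2}(p)\bigr)$. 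Since $p\in(\C^*)^2$ is arbitrary, $w_{f_1}=w_{f_2}$.
\end{itemize}

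The paper does the same thing in one stroke. It applies this vanishing to $h=f-f(p)-f_z(p)(z-a)-f_t(p)(t-b)$ for an arbitrary $f\in R$, which yields $T(f)=T(z)\partial_zf$ directly. That covers the case $m=0$ as well, so your Steps 1 and 2 become unnecessary. Note also that this lemma needs no condition $k+q\ne0$; $q\notin\Z$ enters only through the form \eqref{eq:nonintegral-action} of the derivations.
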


\begin{proof}
Fix $f\in R$ and $p=(a,b)\in(\C^*)^2$, and write
$f_z=\partial_zf$ and $f_t=\partial_tf$. Define
\begin{equation}\label{eq:local-first-jet}
h=f-f(p)-f_z(p)(z-a)-f_t(p)(t-b).
\end{equation}
Then $h(p)=0$, $h_z=f_z-f_z(p)$ and $h_t=f_t-f_t(p)$,
so $h_z(p)=h_t(p)=0$. Consequently,
$(E_zh)(p)=a h_z(p)=0$ and $(E_th)(p)=b h_t(p)=0$.
By \eqref{eq:nonintegral-action}, these equalities imply
$D(h)(p)=0$ for every derivation $D$.
Locality gives $T(h)=D_h(h)$ for some derivation $D_h$,
and hence $T(h)(p)=0$.

Since $f(p)$, $f_z(p)$ and $f_t(p)$ are scalars,
linearity of $T$ and the equalities $T(1)=T(t)=0$ give
$T(h)=T(f)-f_z(p)T(z)$. Evaluating at $p$, we obtain
$T(f)(p)=f_z(p)T(z)(p)$. As $p$ is arbitrary,
the Laurent polynomial $T(f)-T(z)\partial_zf$ vanishes
on $(\C^*)^2$ and is therefore zero. Indeed, multiplying
by a suitable monomial reduces this assertion to
the corresponding fact for ordinary polynomials.
Thus $T(f)=T(z)\partial_zf$ for every $f\in R$.

Expand the fixed Laurent polynomial $z^{-1}T(z)$ as
\begin{equation*}
z^{-1}T(z)=\sum_{r,k}c_{r,k}z^rt^k,
\end{equation*}
where only finitely many coefficients are nonzero.
Applying the preceding identity to $L_{m,i}=z^mt^i$ yields
\begin{equation*}
\begin{aligned}
T(L_{m,i})
&=T(z)\partial_z(z^mt^i)
 =mz^mt^i\bigl(z^{-1}T(z)\bigr)\\
&=m\sum_{r,k}c_{r,k}z^{m+r}t^{i+k}
 =m\sum_{r,k}c_{r,k}L_{m+r,i+k}.
\end{aligned}
\end{equation*}
This proves \eqref{eq:nonintegral-basis}.
The coefficients are independent of $m$ and $i$ because
they come from the fixed polynomial $z^{-1}T(z)$.
\end{proof}

\begin{lemma}\label{lem:nonintegral-inner}
Let $T\in\LDer(\B(q))$ satisfy
\eqref{eq:nonintegral-basis}. Then $c_{r,k}=0$ for $r\ne0$
and $T$ is inner.
\end{lemma}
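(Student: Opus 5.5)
The plan is to rewrite $T$ in Laurent-polynomial form, use locality at a few well-chosen test elements to kill the $z$-shifting part, and then solve explicitly for the inner derivation. From the proof of Lemma~\ref{lem:nonintegral-action} we have $T(f)=G\,E_zf$ for all $f\in R$, where $G=z^{-1}T(z)=\sum_{r,k}c_{r,k}z^rt^k$. The proof splits into two steps: first show that $G$ contains no $z$, i.e.\ $G=g(t)$; then exhibit $u$ with $T=\ad u$.

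\emph{Inner part (easy step).} Suppose $G=g(t)=\sum_k c_{0,k}t^k$. For any $u=u(t)\in\C[t^{\pm1}]$ we have $E_zu=0$, so \eqref{eq:nonintegral-action} gives $[u,f]_q=(E_tu+qu)E_zf$. Put
\begin{equation*}
u=\sum_k \frac{c_{0,k}}{k+q}\,t^k .
\end{equation*}
This is legitimate because $k+q\neq0$ for every $k\in\Z$ when $q\notin\Z$. Then $E_tu+qu=g$, hence $T=\ad u$.

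\emph{Vanishing of $c_{r,k}$ for $r\neq0$ (main step).} I will apply locality to test elements $f$ with $E_tf=0$ and compare with \eqref{eq:nonintegral-action}. For such $f$ this reads
\begin{equation*}
G\,E_zf=(E_tu+qu)E_zf-q(E_zu)f ,
\end{equation*}
where $u$ depends on $f$.

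The first choice is $f=z+\lambda z^{-1}$ with $\lambda\in\C^*$, so that $E_zf=z-\lambda z^{-1}$. On the curve $z^2=\lambda$ the left side and the first term on the right vanish, while $f=2z\neq0$. Hence $E_zu_\lambda$ vanishes on that curve. This should force a factorization $E_zu_\lambda=(z^2-\lambda)w_\lambda$. Substituting back and cancelling the factor $z-\lambda z^{-1}$ gives
\begin{equation*}
G=E_tu_\lambda+qu_\lambda-q\,w_\lambda\,(z^2+\lambda).
\end{equation*}

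Next I will compare this with the identity obtained from $f=z$, namely $G=E_tu'+qu'-qE_zu'$. Reading both identities coefficientwise on $z^rt^k$ gives linear relations among the coefficients of $u_\lambda$, $w_\lambda$ and the $c_{r,k}$. I expect to argue on the extreme $z$-degrees: let $r_0\neq0$ be the largest (or smallest) $r$ with some $c_{r,k}\neq0$. The top-degree coefficients of $u_\lambda$ must then produce $z^{r_0}$ terms whose coefficients carry factors such as $k+q-qr$. These factors are controlled because $q\notin\Z$, and comparing the two test elements (or varying $\lambda$ together with the independence of $G$ from $\lambda$, as in Lemma~\ref{lem:nonintegral-normalization}) should yield $c_{r_0,k}=0$, a contradiction.

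I expect this step to be the real obstacle. The difficulty is that $u$ changes with the test element, and the factors $k+q-qr$ can vanish when $q$ is rational. If the degree argument does not close, the fallback is a pointwise-evaluation argument in the style of Lemma~\ref{lem:nonintegral-action}. There I would choose $f$ (for instance $f=z^m+\lambda z^{-m}$, or $z$-polynomials multiplied by powers of $t$) so that at a suitable point $p$ the quantities $E_zf(p)$, $E_tf(p)$ and $f(p)$ make the right side of \eqref{eq:nonintegral-action} vanish while $E_zf(p)\neq0$. That would force $G(p)=0$ along enough of a curve to deduce that $G-G|_{z\text{-free part}}$ vanishes.
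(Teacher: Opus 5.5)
\paragraph{Verdict: genuine gap.} Your reformulation $T(f)=G\,E_zf$ with $G=\sum c_{r,k}z^rt^k$ follows directly from \eqref{eq:nonintegral-basis} by linearity. Your inner step, $u=\sum_k c_{0,k}(k+q)^{-1}t^k$ once $G$ is $z$-free, is correct and is exactly the paper's element $v$. The gap is the main claim, $c_{r,k}=0$ for $r\neq0$: it is never proved. You stop at ``I expect'' and ``should yield.''

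\paragraph{Why your route stalls.} Your computation with $f=z+\lambda z^{-1}$ is correct, but it only yields $G=E_tu_\lambda+qu_\lambda-qw_\lambda(z^2+\lambda)$ with unknown $u_\lambda,w_\lambda$. The comparison with $f=z$ gives $c_{r,k}=(k+q-qr)a_{r,k}$, which constrains only the resonant pairs with $k=q(r-1)$. More fundamentally, restricting to test elements with $E_tf=0$, i.e.\ $f\in\C[z^{\pm1}]$, discards the monomials $z^rt^k$ with $k\neq0$. For such $f$ the locality equation splits into separate one-variable problems, one for each $t^k$-component of $G$. You would then have to show that each has no Laurent-polynomial solution unless that component is constant. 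That is a recursion analysis whose difficulty depends on whether $(k+q)/q$ is an integer, which is precisely the obstacle you flagged.

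\paragraph{The fallback does not work either.} For any $f$ with $E_zf(p)\neq0$, the term $(E_tu+qu)(p)\,E_zf(p)$ is uncontrolled, because $u$ is supplied by locality rather than chosen by you. In fact the inner map $T=\ad t^k$ satisfies the hypothesis with $G=(k+q)t^k$, which vanishes nowhere on $(\C^*)^2$. So no argument can force $G(p)=0$ at points.

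\paragraph{The missing idea.} Apply locality at the monomial $L_{r,k}$ itself ($r\neq0$) and compare coefficients of $L_{2r,2k}$.
\begin{itemize}
\item On the left, $T(L_{r,k})=r\sum_{r',k'}c_{r',k'}L_{r+r',k+k'}$ has coefficient $rc_{r,k}$.
\item On the right, write $T(L_{r,k})=[w,L_{r,k}]_q+dkL_{r,k}$. The term $dkL_{r,k}$ cannot reach $L_{2r,2k}$, since $(r,k)\neq(0,0)$.
\item In the bracket, only $w_{r,k}L_{r,k}$ can produce $L_{2r,2k}$, and its coefficient is $w_{r,k}\bigl(r(k+q)-r(k+q)\bigr)=0$, because $[L_{r,k},L_{r,k}]_q=0$.
\end{itemize}
Hence $c_{r,k}=0$ for all $r\neq0$ and all $k$. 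This needs no case distinction on $q$ and involves no resonance factors. Combined with your inner step, it proves the lemma.
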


\begin{proof}
Fix $r\ne0$ and $k\in\Z$.
Locality at $L_{r,k}$ gives
$w=\sum_{a,b}w_{a,b}L_{a,b}$ and $d\in\C$ with
\begin{equation*}
T(L_{r,k})=[w,L_{r,k}]_q+dkL_{r,k}.
\end{equation*}
Compare the coefficient of $L_{2r,2k}$.
On the left, \eqref{eq:nonintegral-basis} gives $rc_{r,k}$.
On the right, the outer term contributes nothing.
In the bracket, only the $L_{r,k}$ term of $w$ can produce
$L_{2r,2k}$. Thus \eqref{eq:block-bracket} gives
\begin{equation*}
rc_{r,k}
=w_{r,k}\bigl(r(k+q)-r(k+q)\bigr)=0.
\end{equation*}
Hence $c_{r,k}=0$ for $r\ne0$.

Write $c_k=c_{0,k}$ and set
$v=\sum_k c_k(k+q)^{-1}L_{0,k}$.
This is a finite sum with nonzero denominators.
For every $m,i\in\Z$,
\begin{equation*}
[v,L_{m,i}]_q
=\sum_k\frac{c_k}{k+q}m(k+q)L_{m,i+k}
=m\sum_kc_kL_{m,i+k}=T(L_{m,i}).
\end{equation*}
Therefore $T=\ad v$.
\end{proof}

\subsection{The case
\texorpdfstring{$q\in\Z\setminus\{0\}$}{q in Z, q nonzero}}
\label{subsec:integral}

We first reduce to positive $q$.
The linear bijection
\begin{equation*}
\Phi_q:\B(q)\longrightarrow\B(-q),
\qquad
\Phi_q(L_{m,i})=-L_{m,-i},
\end{equation*}
is an isomorphism. Indeed, using the bracket with parameter
$-q$ in the target algebra, we obtain
\begin{equation*}
\begin{aligned}
\bigl[\Phi_q(L_{m,i}),\Phi_q(L_{n,j})\bigr]_{-q}
&=\bigl(n(-i-q)-m(-j-q)\bigr)L_{m+n,-i-j}\\
&=\Phi_q([L_{m,i},L_{n,j}]_q).
\end{aligned}
\end{equation*}
Conjugation by $\Phi_q$ preserves derivations and local
derivations. It therefore suffices to consider $q>0$.

For the rest of this subsection, fix a positive integer $q$.
Let $\ell_q(f)$ denote the coefficient of
$s=t^{-2q}=L_{0,-2q}$ in $f$, and put
\begin{equation*}
c=t^{-q}=L_{0,-q},
\qquad V_q=\ker\ell_q.
\end{equation*}
Thus $c$ is central, $R=V_q\oplus\C s$, and $V_q$ is the
span of all basis elements except $s$.
Proposition~\ref{prop:derivations} gives
\begin{equation}\label{eq:integral-action}
\begin{aligned}
D(f)={}&[u,f]_q+\alpha E_tf+\beta t^{-q}E_zf\\
&+\gamma t^{-q}(E_tf+qf)+\delta\ell_q(f)c.
\end{aligned}
\end{equation}

\begin{lemma}\label{lem:integral-normalization}
For every $\Delta\in\LDer(\B(q))$, there exists
$D\in\Der(\B(q))$ such that $T=\Delta-D$ satisfies
$T(1)=T(t)=T(c)=0$.
\end{lemma}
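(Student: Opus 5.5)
The plan is to adapt the proof of Lemma~\ref{lem:nonintegral-normalization}, now using the larger family of derivations \eqref{eq:integral-action}. We normalize $\Delta$ at $1$, then at $t$, and finally show that $c$ needs no correction. The computations rely on a few values of the five terms of \eqref{eq:integral-action} at the elements $1,t,c$. For these we use $\ell_q(1)=\ell_q(t)=\ell_q(c)=0$, which holds because $q>0$, so $-2q\notin\{0,1,-q\}$. We also use $[u,1]_q=-q\,E_zu$ and $[u,t]_q=-(q+1)t\,E_zu$, both from \eqref{eq:laurent-bracket}, together with $[u,c]_q=0$ since $c$ is central.

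\emph{Step 1: normalization at $1$.} Evaluating \eqref{eq:integral-action} at $1$ gives $D(1)=-q\,E_zu+q\gamma c$, since $E_t1=E_z1=0$ and $\ell_q(1)=0$. This is exactly $(\ad u+\gamma D_4)(1)$. Locality at $1$ then shows that $\Delta_0=\Delta-\ad u-\gamma D_4$ satisfies $\Delta_0(1)=0$.

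\emph{Step 2: normalization at $t$.} For $\lambda\in\C^*$ we apply locality to $t+\lambda1$ and use $\Delta_0(t+\lambda1)=\Delta_0(t)$. The previous step's computation of $D(1)$ and the values listed above give
\begin{equation*}
\Delta_0(t)=-\bigl((q+1)t+q\lambda\bigr)E_zv_\lambda+\alpha_\lambda t
+\gamma_\lambda\bigl((q+1)t+q\lambda\bigr)t^{-q}.
\end{equation*}
We write $\Delta_0(t)=\sum_r z^rF_r(t)$ and compare powers of $z$. For $r\neq0$ the argument is identical to the one in Lemma~\ref{lem:nonintegral-normalization}. For $b\ne0$ we choose $\lambda=-(q+1)b/q\neq0$, which gives $F_r(b)=0$ and hence $F_r=0$.

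For $r=0$ we get $F_0(t)=\alpha_\lambda t+\gamma_\lambda(q+1)t^{1-q}+\gamma_\lambda q\lambda t^{-q}$. Since $q\ge1$, the exponents $1$, $1-q$ and $-q$ are pairwise distinct. Because $F_0$ does not depend on $\lambda$, the coefficients $\gamma_\lambda(q+1)$ and $\gamma_\lambda q\lambda$ must both be constant in $\lambda$. This forces $\gamma_\lambda=0$, so $F_0=\kappa t$ and $\Delta_0(t)=\kappa t$. Subtracting $\kappa D_1$, which vanishes at $1$, yields $T$ with $T(1)=T(t)=0$.

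\emph{Step 3: the element $c$.} I expect $T(c)=0$ to hold automatically, with no further correction. Since $c$ is central and $E_zc=0$, $E_tc+qc=0$ and $\ell_q(c)=0$, every derivation satisfies $D(c)=-q\alpha c$. Locality at $c$ therefore gives $T(c)=\mu c$ for some $\mu\in\C$.

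To pin down $\mu$, we apply locality to $c+t$ and use $T(c+t)=\mu c$. The $z^0$-part of $D(c+t)$ is $-q\alpha c+\alpha t+\gamma(q+1)t^{1-q}$, because $[u,t]_q$ has no $z^0$-component and $\ell_q(c+t)=0$. Since the exponents $-q$, $1$ and $1-q$ are distinct, comparing the coefficients of $t$ gives $\alpha=0$. Comparing the coefficients of $t^{-q}$ then gives $\mu=-q\alpha=0$.

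Hence $D=\ad u+\gamma D_4+\kappa D_1$ has the required property. The main obstacle is Step~2, where one must keep track of which monomials in $t$ the terms coming from $D_4$ can produce, and check that they cannot absorb the $\lambda$-dependence. The positivity of $q$ is used precisely there, and again in Step~3, to ensure that the exponents are distinct.
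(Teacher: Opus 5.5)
Your proof is correct and uses the paper's ingredients: locality at $1$, locality at $t+\lambda 1$ (the evaluation trick for $r\neq 0$, plus $\lambda$-independence of $F_0$ forcing $\gamma_\lambda=0$), and locality at $t+c$ (comparing the coefficients of $t$ and $t^{-q}$). The only difference is the order of normalization: the paper spends the $D_1$ correction on $c$ first and deduces $T(t)=0$ from $t+c$, whereas you spend it on $t$ and deduce $T(c)=0$ from $c+t$, which is equivalent.
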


\begin{proof}
Formula~\eqref{eq:integral-action} gives $D(c)=-q\alpha c$
for every derivation. Thus $\Delta(c)=\rho c$ for some
$\rho\in\C$, and
$\Delta_0=\Delta+(\rho/q)D_1$ satisfies $\Delta_0(c)=0$.
Locality at $1$ gives
$\Delta_0(1)=[u,1]_q+\gamma qc$.
Since $c$ is central, $D_4(1)=qc$ and $D_4(c)=0$,
the local derivation $T=\Delta_0-\ad u-\gamma D_4$
satisfies $T(1)=T(c)=0$.

It remains to show $T(t)=0$.
For each $\lambda\ne0$, apply locality to $t+\lambda1$.
Formula~\eqref{eq:integral-action} yields
\begin{equation}\label{eq:integral-normalization-t}
\begin{aligned}
T(t)={}&-\bigl((q+1)t+q\lambda\bigr)E_zu_\lambda
+\alpha_\lambda t\\
&+\gamma_\lambda\bigl((q+1)t^{1-q}+q\lambda t^{-q}\bigr).
\end{aligned}
\end{equation}
Write $T(t)=\sum_rz^rF_r(t)$.
The terms involving $\alpha_\lambda$ and $\gamma_\lambda$
have $z$-degree zero. For $r\ne0$, coefficient comparison
therefore gives the first identity in
\eqref{eq:nonintegral-parameter-factor}.
The evaluation argument following that equation shows
that $F_r=0$ for $r\ne0$, since $q\ne0,-1$.
Thus $T(t)=F_0(t)$.

The $z^0$ coefficient of
\eqref{eq:integral-normalization-t} is
\begin{equation*}
F_0(t)=\alpha_\lambda t
+\gamma_\lambda(q+1)t^{1-q}
+\gamma_\lambda q\lambda t^{-q}.
\end{equation*}
The powers $1$, $1-q$ and $-q$ are distinct.
Comparing the last two coefficients for $\lambda=1,2$
gives $\gamma_1=\gamma_2$ and $\gamma_1=2\gamma_2$.
Therefore $\gamma_1=\gamma_2=0$, and $T(t)=\kappa t$
for some $\kappa\in\C$.

Finally, $T(t+c)=T(t)$.
The $z^0$ coefficient of its local derivation gives
\begin{equation*}
\kappa t=\alpha(t-qt^{-q})+\gamma(q+1)t^{1-q}.
\end{equation*}
Comparison at $t^{-q}$ gives $\alpha=0$, and comparison
at $t$ gives $\kappa=0$. Hence $T(t)=0$.
\end{proof}

\begin{lemma}\label{lem:integral-basis}
Let $T\in\LDer(\B(q))$ satisfy $T(1)=T(t)=0$.
There is a finite family of coefficients $c_{r,k}\in\C$,
independent of $m,i$, such that
\begin{equation}\label{eq:integral-basis}
T(L_{m,i})=m\sum_{r,k}c_{r,k}L_{m+r,i+k},
\qquad (m,i)\ne(0,-2q).
\end{equation}
\end{lemma}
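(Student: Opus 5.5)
The plan is to repeat the first-jet argument from Lemma~\ref{lem:nonintegral-action}, using \eqref{eq:integral-action} in place of \eqref{eq:nonintegral-action}. The only new ingredient is the outer derivation $D_5$, whose term $\delta\ell_q(f)c$ is not controlled by the first jet of $f$ at a point. I will therefore run the argument only for $f\in V_q$. This is why the basis element $s=L_{0,-2q}$ is excluded in \eqref{eq:integral-basis}.

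First, fix $f\in V_q$ and $p=(a,b)\in(\C^*)^2$, and define $h$ by \eqref{eq:local-first-jet}. As before, $h(p)=0$, $(E_zh)(p)=0$ and $(E_th)(p)=0$. The subtracted terms $f(p)$, $f_z(p)(z-a)$ and $f_t(p)(t-b)$ are combinations of $1$, $z$ and $t$. Since $q>0$, none of these is $t^{-2q}$, so $\ell_q(h)=\ell_q(f)=0$.

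Next, I check each term of \eqref{eq:integral-action} applied to $h$ and evaluated at $p$.
\begin{itemize}
\item The bracket $[u,h]_q=(E_tu+qu)E_zh-(E_zu)(E_th+qh)$ vanishes at $p$, since $E_zh$, $E_th$ and $h$ all vanish there.
\item The terms $\alpha E_th$, $\beta t^{-q}E_zh$ and $\gamma t^{-q}(E_th+qh)$ vanish at $p$, since $t^{-q}$ is a monomial evaluated at $b\ne0$.
\item The term $\delta\ell_q(h)c$ is zero, because $\ell_q(h)=0$.
\end{itemize}
So $D(h)(p)=0$ for every derivation $D$. Locality at $h$ then gives $T(h)(p)=0$.

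Using linearity of $T$ and $T(1)=T(t)=0$, we get $T(h)=T(f)-f_z(p)T(z)$. Hence $T(f)(p)=f_z(p)\,T(z)(p)$ for all $p\in(\C^*)^2$. The Laurent polynomial $T(f)-T(z)\partial_zf$ therefore vanishes on $(\C^*)^2$, so it is zero. Thus $T(f)=T(z)\partial_zf$ for all $f\in V_q$; note that $z\in V_q$, so this is consistent.

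Finally, I expand $z^{-1}T(z)=\sum_{r,k}c_{r,k}z^rt^k$, a finite sum. Applying the identity to $f=L_{m,i}$ with $(m,i)\ne(0,-2q)$, exactly as in Lemma~\ref{lem:nonintegral-action}, gives \eqref{eq:integral-basis}. The coefficients are independent of $m$ and $i$, since they come from the fixed element $T(z)$.

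The main point to get right is the $D_5$ term, and it is handled by the restriction to $V_q$. The hypothesis $q>0$ matters only in making sure that $1$, $z$ and $t$ are different from $s$. The value $T(s)$ is not determined by this argument; I expect it to be handled separately later in the paper.
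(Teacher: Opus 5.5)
Your proposal is correct and follows the paper's proof essentially verbatim. You restrict the first-jet argument of Lemma~\ref{lem:nonintegral-action} to $f\in V_q$, so that $\ell_q(h)=0$ removes the $D_5$ term while the other terms of \eqref{eq:integral-action} vanish at $p$; you then deduce $T(f)=T(z)\partial_zf$ on $V_q$ and expand $z^{-1}T(z)$ to obtain \eqref{eq:integral-basis}.
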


\begin{proof}
We adapt the proof of Lemma~\ref{lem:nonintegral-action}
to $V_q$. Fix $f\in V_q$ and $p=(a,b)\in(\C^*)^2$,
and define $h$ by \eqref{eq:local-first-jet}.
Since $q>0$, the elements $1,z,t$ belong to $V_q$.
Hence $h\in V_q$, so $\ell_q(h)=0$.
As before, $h(p)=h_z(p)=h_t(p)=0$.

In \eqref{eq:integral-action}, the inner term and the terms
involving $\alpha,\beta,\gamma$ therefore vanish at $p$.
The remaining term is zero because $\ell_q(h)=0$.
Thus locality gives $T(h)(p)=0$.
The rest of the pointwise argument in
Lemma~\ref{lem:nonintegral-action} now gives
$T(f)=T(z)\partial_zf$ for every $f\in V_q$.
Expanding $z^{-1}T(z)=\sum_{r,k}c_{r,k}z^rt^k$
and applying this identity to $f=L_{m,i}\in V_q$
proves \eqref{eq:integral-basis}.
\end{proof}

\begin{lemma}\label{lem:integral-inner}
Let $T\in\LDer(\B(q))$ satisfy \eqref{eq:integral-basis}.
Then $c_{r,k}=0$ for $r\ne0$.
Moreover, there exist $v\in R$ and $\beta\in\C$ such that
$T(f)=[v,f]_q+\beta D_3(f)$ for all $f\in V_q$.
\end{lemma}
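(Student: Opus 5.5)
The plan has two steps: first kill the coefficients $c_{r,k}$ with $r\ne0$ by a degree comparison, exactly as in Lemma~\ref{lem:nonintegral-inner}; then match the surviving $r=0$ coefficients with an inner derivation plus a multiple of $D_3$.

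\emph{Step 1: $c_{r,k}=0$ for $r\ne0$.} Fix $r\ne0$ and $k\in\Z$. Then $(r,k)\ne(0,-2q)$, so \eqref{eq:integral-basis} applies to $L_{r,k}$, and the coefficient of $L_{2r,2k}$ in $T(L_{r,k})$ equals $rc_{r,k}$. Locality at $L_{r,k}$ together with \eqref{eq:integral-action} gives
\begin{equation*}
T(L_{r,k})=[w,L_{r,k}]_q+\alpha kL_{r,k}+\beta rL_{r,k-q}+\gamma(k+q)L_{r,k-q}+\delta\ell_q(L_{r,k})c .
\end{equation*}
The $\delta$-term vanishes, since $\ell_q(L_{r,k})=0$ for $r\ne0$. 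The $\alpha$-, $\beta$- and $\gamma$-terms have first index $r\ne 2r$, so they cannot contribute to $L_{2r,2k}$. In the bracket, only the component $w_{r,k}L_{r,k}$ of $w$ can produce $L_{2r,2k}$, and its structure constant is $r(k+q)-r(k+q)=0$ by \eqref{eq:block-bracket}. Hence $rc_{r,k}=0$, and so $c_{r,k}=0$.

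\emph{Step 2: the formula on $V_q$.} Write $c_k=c_{0,k}$. By Step 1, $T(L_{m,i})=m\sum_kc_kL_{m,i+k}$ for every basis element of $V_q$. I would set
\begin{equation*}
v=\sum_{k\ne-q}\frac{c_k}{k+q}L_{0,k},\qquad \beta=c_{-q}.
\end{equation*}
By \eqref{eq:block-bracket}, $[L_{0,k},L_{m,i}]_q=m(k+q)L_{m,i+k}$, so $[v,L_{m,i}]_q=m\sum_{k\ne-q}c_kL_{m,i+k}$. The missing index $k=-q$ gives $mc_{-q}L_{m,i-q}$, and this is exactly $\beta D_3(L_{m,i})$. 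Summing the two pieces, $T(L_{m,i})=[v,L_{m,i}]_q+\beta D_3(L_{m,i})$ for all basis elements $L_{m,i}\ne s$. The case $m=0$ is consistent, since both sides are then zero. Linearity extends the identity to all of $V_q$.

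\emph{Expected obstacle.} There is no real obstacle. The only points needing care are the following. In Step 1, one must observe that the central term $\delta\ell_q(\cdot)c$ and the shifted terms from $D_3,D_4$ never reach the degree $(2r,2k)$. In Step 2, the resonant index $k=-q$, where the denominator $k+q$ vanishes, cannot be absorbed into $\ad v$; it is precisely what forces the extra summand $\beta D_3$.
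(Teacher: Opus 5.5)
Your proposal is correct and matches the paper's proof: you compare the $L_{2r,2k}$ coefficient in $T(L_{r,k})$, noting that the $D_3,D_4$ terms keep first index $r$ and the $D_5$ term vanishes, and then you make the same choice $v=\sum_{k\ne-q}c_k(k+q)^{-1}L_{0,k}$ and $\beta=c_{-q}$. One cosmetic point is that you use $\beta$ both for the $D_3$-coefficient of the local derivation at $L_{r,k}$ and for $c_{-q}$; this is harmless, but renaming one of them would be cleaner.
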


\begin{proof}
Use the coefficient comparison from the proof of
Lemma~\ref{lem:nonintegral-inner}.
For $r\ne0$, apply locality to $L_{r,k}$ and compare
the coefficient of $L_{2r,2k}$.
The additional maps $D_3,D_4$ preserve the first index $r$,
while $D_5(L_{r,k})=0$.
Thus they contribute nothing to this coefficient,
and the same comparison gives $rc_{r,k}=0$.
Hence $c_{r,k}=0$ for $r\ne0$.

Write $c_k=c_{0,k}$.
The construction of the inner element in
Lemma~\ref{lem:nonintegral-inner} applies to all $k\ne-q$;
the remaining coefficient is represented by $D_3$.
Explicitly, set
\begin{equation*}
v=\sum_{k\ne-q}\frac{c_k}{k+q}L_{0,k},
\qquad \beta=c_{-q}.
\end{equation*}
All denominators in this finite sum are nonzero.
Using \eqref{eq:block-bracket}, for $(m,i)\ne(0,-2q)$ we get
\begin{equation*}
\begin{aligned}
\bigl[v,L_{m,i}\bigr]_q+\beta D_3(L_{m,i})
&=m\sum_{k\ne-q}c_kL_{m,i+k}
  +mc_{-q}L_{m,i-q}\\
&=m\sum_kc_kL_{m,i+k}=T(L_{m,i}).
\end{aligned}
\end{equation*}
This proves the assertion on $V_q$.
\end{proof}

\begin{lemma}\label{lem:integral-central}
If $P\in\LDer(\B(q))$ and $P|_{V_q}=0$, then $P=\delta D_5$
for some $\delta\in\C$.
\end{lemma}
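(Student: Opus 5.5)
Since $R=V_q\oplus\C s$ with $s=t^{-2q}=L_{0,-2q}$ and $P$ is linear with $P|_{V_q}=0$, the map $P$ is determined by the single element $P(s)$. Writing $P(f)=\ell_q(f)P(s)$, it suffices to show that $P(s)\in\C c$, because then $P=\delta D_5$ with $\delta$ the coefficient of $c$ in $P(s)$ (recall $D_5(f)=\ell_q(f)c$). The plan is to apply locality to elements of the form $s+g$ with $g\in V_q$, for which $P(s+g)=P(s)$, and to choose $g$ so that every derivation applied to $s+g$ has a restricted shape, forcing $P(s)$ into $\C c$.

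First apply locality at $s$ itself. By \eqref{eq:integral-action},
\begin{equation*}
D(s)=[u,s]_q-2q\alpha s+\gamma t^{-q}(-2q+q)s+\delta c
=[u,s]_q-2q\alpha s-q\gamma t^{-3q}+\delta c,
\end{equation*}
since $E_zs=0$. Here $[u,s]_q=-(E_zu)(E_ts+qs)=q(E_zu)s$, which has no $z^0$ part. So $P(s)=q(E_zu)s-2q\alpha s-q\gamma t^{-3q}+\delta c$. It remains to eliminate the $z$-dependent part and the $s$, $t^{-3q}$ components. For the latter, apply locality at $s+c$: since $c$ is central with $D(c)=-q\alpha' c$, we get $P(s)=q(E_zu')s-2q\alpha's-q\gamma't^{-3q}+\delta'c-q\alpha'c+\gamma' q\, t^{-2q}\cdot 0\ldots$. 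More concretely, the $z^0$-part of $P(s)$ is a fixed combination $a_1 s+a_2t^{-3q}+a_3c$, and choosing suitable auxiliary elements (e.g. $s+\lambda t$ or $s+\lambda 1$ with varying $\lambda\in\C^*$, where $D_4(1)=qc$ and $\ad u(t)$ contribute extra terms depending on $\lambda$) lets one compare coefficients as in the proof of Lemma~\ref{lem:integral-normalization} and conclude $a_1=a_2=0$.

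For the $z\neq0$ part, write $P(s)=\sum_r z^rF_r(t)$. Locality at $s$ shows $F_r(t)=qr\,U_r(t)t^{-2q}$ for $r\ne0$, which is no restriction yet. So I would use locality at $s+\lambda t$ (or $s+\lambda 1$): the $z^r$ component of $D(s+\lambda t)$ becomes $-r U_{r,\lambda}(t)\bigl(-q t^{-2q}+\lambda(q+1)t\bigr)$, i.e. $F_r(t)$ is divisible, as a Laurent polynomial in $t$, by $\lambda(q+1)t^{2q+1}-q$ for every $\lambda\in\C^*$. Evaluating: for any $b\in\C^*$ choose $\lambda=q/((q+1)b^{2q+1})$, giving $F_r(b)=0$, hence $F_r=0$ for $r\ne0$. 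This mirrors the evaluation trick of Lemma~\ref{lem:nonintegral-normalization}, and the analogous $z^0$ comparison (now with terms $\alpha_\lambda(-2qs+\lambda t)$, $\gamma_\lambda t^{-q}(-qs+\lambda(q+1)t)$, $\delta_\lambda c$) for two values of $\lambda$ kills $\alpha$- and $\gamma$-contributions to $s$ and $t^{-3q}$, since the powers $-2q,1,-3q,1-q,-q$ are distinct.

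The main obstacle is the $z^0$ bookkeeping: making sure the coefficients of $t^{-2q}$ and $t^{-3q}$ in $F_0$ are forced to zero despite $\lambda$-dependent $\alpha_\lambda,\gamma_\lambda$. I expect comparing the $t^1$ and $t^{1-q}$ coefficients (which are $\lambda\alpha_\lambda$ and $\lambda(q+1)\gamma_\lambda$, fixed by $F_0$) shows $\alpha_\lambda,\gamma_\lambda$ scale as $1/\lambda$; but $F_0$ does not depend on $\lambda$, and these coefficients of $F_0$ are zero-or-fixed, and then the $s$ and $t^{-3q}$ coefficients $-2q\alpha_\lambda$, $-q\gamma_\lambda$ must be $\lambda$-independent, forcing all to vanish. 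What remains is $F_0=\delta c$, proving $P=\delta D_5$.
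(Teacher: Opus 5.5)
Your proposal is correct and is essentially the paper's proof. Locality at $s$ confines the $z^0$ part of $P(s)$ to $\operatorname{span}\{s,t^{-3q},c\}$. Locality at $s+\lambda t$ then removes $\alpha_\lambda,\gamma_\lambda$ via the $t$ and $t^{1-q}$ coefficients, and removes the $z^r$ parts, $r\ne0$, by the same evaluation choice $\lambda=q/((q+1)b^{2q+1})$ that the paper uses. You can drop the asides about $s+c$ and $s+\lambda1$: neither element produces the auxiliary powers $t$ and $t^{1-q}$ that are needed to eliminate the $s$ and $t^{-3q}$ components.
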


\begin{proof}
Put $H=P(s)$. Since $R=V_q\oplus\C s$,
$P(f)=\ell_q(f)H$ for all $f\in R$.
We show that $H$ is a scalar multiple of $c$.

Write $H=\sum_rz^rH_r(t)$.
Locality at $s$ and \eqref{eq:integral-action} give
\begin{equation*}
H=qt^{-2q}E_zu-2q\alpha t^{-2q}
-q\gamma t^{-3q}+\delta t^{-q}.
\end{equation*}
The $z^0$ coefficient of $E_zu$ is zero, so
\begin{equation*}
H_0(t)\in\operatorname{span}\{t^{-2q},t^{-3q},t^{-q}\}.
\end{equation*}

For $\lambda\ne0$, we have $P(s+\lambda t)=H$.
Applying locality once more gives
\begin{equation}\label{eq:integral-last-value}
\begin{aligned}
H={}&\bigl(qt^{-2q}-\lambda(q+1)t\bigr)E_zu_\lambda\\
&+\alpha_\lambda(-2qt^{-2q}+\lambda t)\\
&+\gamma_\lambda\bigl(-qt^{-3q}
  +\lambda(q+1)t^{1-q}\bigr)+\delta_\lambda t^{-q}.
\end{aligned}
\end{equation}
The powers $1$ and $1-q$ do not belong to
$\{-q,-2q,-3q\}$, because $q>0$.
Comparison of the $t$ and $t^{1-q}$ coefficients in the
$z^0$ part of \eqref{eq:integral-last-value} therefore gives
$\alpha_\lambda=\gamma_\lambda=0$.
Hence $H_0=\delta c$ for some fixed $\delta\in\C$.

For $r\ne0$, write
$u_\lambda=\sum_rz^rU_{r,\lambda}(t)$.
The $z^r$ coefficient of \eqref{eq:integral-last-value} is
\begin{equation*}
H_r(t)=r\bigl(qt^{-2q}-\lambda(q+1)t\bigr)U_{r,\lambda}(t).
\end{equation*}
We use the same evaluation argument as in the proof of
Lemma~\ref{lem:nonintegral-normalization}, now choosing
$\lambda=\frac{q}{q+1}b^{-(2q+1)}$ for each $b\ne0$.
The factor in parentheses vanishes at $t=b$,
so $H_r=0$ whenever $r\ne0$.
Thus $H=\delta c$, and therefore $P=\delta D_5$.
\end{proof}

\subsection{The case \texorpdfstring{$q=0$}{q=0}}
\label{subsec:zero}

Here the central element is $1=L_{0,0}$,
and $\ell_0(f)$ is the constant coefficient of $f$.
Proposition~\ref{prop:derivations} gives
\begin{equation}\label{eq:zero-action}
D(f)=[u,f]_0+\alpha E_tf+\beta E_zf+\delta\ell_0(f)1.
\end{equation}
In this case, $[u,f]_0=E_tu\,E_zf-E_zu\,E_tf$.
We first describe a local derivation whose value at $1$
has been removed by subtracting a multiple of $D_5$.

\begin{lemma}\label{lem:zero-basis}
Let $T\in\LDer(\B(0))$ satisfy $T(1)=0$.
There exist $A,B\in R$ such that
\begin{equation}\label{eq:zero-vector-form}
T(f)=A E_zf+B E_tf,
\qquad f\in R.
\end{equation}
In particular, if
$A=\sum_{r,k}a_{r,k}z^rt^k$ and
$B=\sum_{r,k}b_{r,k}z^rt^k$, then
\begin{equation}\label{eq:zero-basis}
T(L_{m,i})=\sum_{r,k}(ma_{r,k}+ib_{r,k})L_{m+r,i+k}.
\end{equation}
Both coefficient families are finite and independent of $m,i$.
\end{lemma}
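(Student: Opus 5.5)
The plan is to adapt the first-jet argument of Lemma~\ref{lem:nonintegral-action}, now with two surviving directions instead of one. Fix $f\in R$ and $p=(a,b)\in(\C^*)^2$, and define $h$ by \eqref{eq:local-first-jet}. Then
\begin{equation*}
h(p)=h_z(p)=h_t(p)=0,
\qquad\text{hence}\qquad
(E_zh)(p)=(E_th)(p)=0.
\end{equation*}
By \eqref{eq:zero-action}, every derivation acts as
\begin{equation*}
D(h)=(E_tu)E_zh-(E_zu)E_th+\alpha E_th+\beta E_zh+\delta\ell_0(h)1 .
\end{equation*}
The first four terms vanish at $p$. The last term does not vanish automatically, because $\ell_0(h)$ need not be zero: the constant term of $h$ is $\ell_0(f)-f(p)+af_z(p)+bf_t(p)$. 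This is the only place where the argument differs from the nonintegral case, and it is the main obstacle.

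To remove the obstacle, I would subtract the right multiple of $1$. Set $h'=h-\ell_0(h)1$. Since $1$ is constant, the partial derivatives of $h'$ equal those of $h$, so $h'_z(p)=h'_t(p)=0$. Moreover $\ell_0(h')=0$, so every derivation satisfies $D(h')(p)=0$. Note that $h'(p)=-\ell_0(h)$ is nonzero in general, but this does not matter: $D(h')$ depends only on $E_zh'$, $E_th'$ and $\ell_0(h')$, and none of these involves the value $h'(p)$. Locality at $h'$ therefore gives $T(h')(p)=0$.

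Next I would expand $T(h')$ using linearity together with $T(1)=0$:
\begin{equation*}
T(h')=T(f)-f_z(p)T(z)-f_t(p)T(t).
\end{equation*}
Evaluating at $p$ gives $T(f)(p)=f_z(p)T(z)(p)+f_t(p)T(t)(p)$. Since $p$ is arbitrary, the Laurent polynomial $T(f)-T(z)\partial_zf-T(t)\partial_tf$ vanishes on $(\C^*)^2$ and is therefore zero.

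Finally, I would put $A=z^{-1}T(z)$ and $B=t^{-1}T(t)$. Then $T(z)\partial_zf=AE_zf$ and $T(t)\partial_tf=BE_tf$, which is \eqref{eq:zero-vector-form}. Applying this to $f=z^mt^i$ gives $T(L_{m,i})=(mA+iB)z^mt^i$. Expanding $A$ and $B$ in monomials then yields \eqref{eq:zero-basis}. The coefficients $a_{r,k}$ and $b_{r,k}$ form finite families independent of $m,i$, because they are the coefficients of the fixed Laurent polynomials $z^{-1}T(z)$ and $t^{-1}T(t)$.
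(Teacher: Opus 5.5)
Your proof is correct and follows the paper's argument. Your corrected element $h'=h-\ell_0(h)1$ equals $f_0-f_z(p)z-f_t(p)t$ with $f_0=f-\ell_0(f)1$, which is the paper's auxiliary element; the only difference is that the paper first proves the identity on $\ker\ell_0$ and then extends it using $T(1)=E_z1=E_t1=0$, whereas your single subtraction of $\ell_0(h)1$ merges these two steps.
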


\begin{proof}
We adapt the pointwise argument in the proof of
Lemma~\ref{lem:nonintegral-action}.
Here the inner and Euler terms in
\eqref{eq:zero-action} involve only first derivatives,
so the condition $h(p)=0$ is unnecessary.

Take $f\in\ker\ell_0$ and $p=(a,b)\in(\C^*)^2$, and set
\begin{equation*}
h=f-f_z(p)z-f_t(p)t.
\end{equation*}
Then $\ell_0(h)=0$ and $h_z(p)=h_t(p)=0$.
Thus every term in \eqref{eq:zero-action} vanishes at $p$
when applied to $h$, and locality gives $T(h)(p)=0$.
Linearity therefore yields
\begin{equation*}
T(f)(p)=f_z(p)T(z)(p)+f_t(p)T(t)(p).
\end{equation*}
Since this holds for every $p$, we obtain
$T(f)=T(z)\partial_zf+T(t)\partial_tf$.

Put $A=z^{-1}T(z)$ and $B=t^{-1}T(t)$.
This proves \eqref{eq:zero-vector-form} on $\ker\ell_0$.
For arbitrary $f$, write $f=f_0+\ell_0(f)1$ with
$f_0\in\ker\ell_0$.
Since $T(1)=E_z1=E_t1=0$, the same identity holds on all of $R$.
Substitution of $f=z^mt^i$ proves \eqref{eq:zero-basis}.
\end{proof}

\begin{lemma}\label{lem:zero-inner}
Let $T\in\LDer(\B(0))$ satisfy
\eqref{eq:zero-vector-form}.
Then there exist $u\in R$ and $\alpha,\beta\in\C$ such that
$T=\ad u+\alpha D_1+\beta D_2$.
\end{lemma}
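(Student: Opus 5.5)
The plan is to compare \eqref{eq:zero-vector-form} with the general shape of an inner-plus-Euler derivation. For $u\in R$ and $\alpha,\beta\in\C$ we have
\begin{equation*}
(\ad u+\alpha D_1+\beta D_2)(f)=(E_tu+\beta)E_zf+(\alpha-E_zu)E_tf .
\end{equation*}
So it suffices to find $u,\alpha,\beta$ with $A=E_tu+\beta$ and $B=\alpha-E_zu$. In coefficients this means the following:
\begin{itemize}
\item $a_{0,0}=\beta$ and $b_{0,0}=\alpha$;
\item $a_{r,k}=k\,u_{r,k}$ and $b_{r,k}=-r\,u_{r,k}$ for $(r,k)\ne(0,0)$.
\end{itemize}
These equations are solvable exactly when $r\,a_{r,k}+k\,b_{r,k}=0$ for every $(r,k)\ne(0,0)$. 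Indeed, for $k=0$ this gives $a_{r,0}=0$; for $r=0$ it gives $b_{0,k}=0$; and for $r,k\ne0$ it gives consistency $a_{r,k}/k=-b_{r,k}/r$. So the whole proof reduces to establishing this single linear relation.

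To get it, I would apply locality at the basis element $L_{r,k}$ with $(r,k)\ne(0,0)$ and compare coefficients of $L_{2r,2k}$, mirroring the proof of Lemma~\ref{lem:nonintegral-inner}.
\begin{itemize}
\item By \eqref{eq:zero-basis}, the left side $T(L_{r,k})$ has coefficient $r\,a_{r,k}+k\,b_{r,k}$ at $L_{2r,2k}$.
\item On the right side, \eqref{eq:zero-action} gives $[w,L_{r,k}]_0+\alpha' kL_{r,k}+\beta' rL_{r,k}+\delta'\ell_0(L_{r,k})1$.
\item The Euler terms only contribute at $L_{r,k}\ne L_{2r,2k}$.
\item The $D_5$ term contributes only at $L_{0,0}$, and $(2r,2k)\ne(0,0)$.
\item In the bracket, only the $L_{r,k}$ component of $w$ can reach $L_{2r,2k}$. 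By \eqref{eq:block-bracket} with $q=0$ its coefficient is $w_{r,k}(rk-rk)=0$.
\end{itemize}
Hence $r\,a_{r,k}+k\,b_{r,k}=0$.

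Then I define $\beta=a_{0,0}$ and $\alpha=b_{0,0}$, and
\begin{equation*}
u=\sum_{k\ne0}\frac{a_{r,k}}{k}z^rt^k-\sum_{r\ne0}\frac{b_{r,0}}{r}z^r .
\end{equation*}
This is a finite sum, and the constant term of $u$ is irrelevant. The relation just proved shows that $E_tu=A-\beta$ and $-E_zu=B-\alpha$ coefficientwise. The only check needed is the $b$-coefficients for $r,k\ne0$, namely $-r\,a_{r,k}/k=b_{r,k}$. Combining this with \eqref{eq:zero-vector-form} gives $T=\ad u+\alpha D_1+\beta D_2$ on all of $R$.

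I expect no serious obstacle. The only delicate point is making sure that no term other than the $w_{r,k}$ component of the bracket lands on $L_{2r,2k}$, in particular the $D_5$ term when $(r,k)\ne(0,0)$. This is immediate from the grading.
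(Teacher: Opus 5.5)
Your proposal is correct and follows the paper's proof essentially step for step. You obtain $r a_{r,k}+k b_{r,k}=0$ by comparing the coefficients of $L_{2r,2k}$ in locality at $L_{r,k}$, then construct the same element $u$ to get $T=\ad u+b_{0,0}D_1+a_{0,0}D_2$. Your upfront remark that this relation is exactly the solvability condition for $A=E_tu+\beta$, $B=\alpha-E_zu$ is a clean way to frame it, but the argument is the same.
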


\begin{proof}
Use the coefficient families in \eqref{eq:zero-basis}.
We apply the comparison from the proof of
Lemma~\ref{lem:nonintegral-inner} to each
$(r,k)\ne(0,0)$.
The coefficient of $L_{2r,2k}$ in $T(L_{r,k})$ is now
$ra_{r,k}+kb_{r,k}$.
The maps $D_1,D_2$ give only multiples of $L_{r,k}$,
and $D_5(L_{r,k})=0$.
Only the $L_{r,k}$ term of the inner element can contribute,
and its contribution is zero because
$[L_{r,k},L_{r,k}]_0=0$.
Therefore
\begin{equation}\label{eq:zero-coefficients}
ra_{r,k}+kb_{r,k}=0,
\qquad (r,k)\ne(0,0).
\end{equation}

Define
\begin{equation*}
u=\sum_{\substack{r,k\in\Z\\k\ne0}}
       \frac{a_{r,k}}{k}L_{r,k}
  -\sum_{r\ne0}\frac{b_{r,0}}{r}L_{r,0}.
\end{equation*}
The sum is finite. For $k\ne0$,
\eqref{eq:zero-coefficients} gives
$b_{r,k}=-ra_{r,k}/k$; for $k=0$ and $r\ne0$, it gives
$a_{r,0}=0$. Consequently,
\begin{equation*}
E_tu=A-a_{0,0},
\qquad -E_zu=B-b_{0,0}.
\end{equation*}
Using \eqref{eq:zero-vector-form}, we conclude that
\begin{equation*}
\begin{aligned}
T(f)
&=(E_tu+a_{0,0})E_zf+(-E_zu+b_{0,0})E_tf\\
&=[u,f]_0+b_{0,0}D_1(f)+a_{0,0}D_2(f).
\end{aligned}
\end{equation*}
This proves the lemma.
\end{proof}

The preceding lemmas give the following theorem.

\begin{theorem}\label{thm:main}
For every $q\in\C$, every local derivation on $\B(q)$
is a derivation. Equivalently,
\begin{equation*}
\LDer(\B(q))=\Der(\B(q)).
\end{equation*}
\end{theorem}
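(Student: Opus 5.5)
The inclusion $\Der(\B(q))\subseteq\LDer(\B(q))$ is trivial, since every derivation $D$ is local with $D_x=D$ for all $x$. For the reverse inclusion we fix $\Delta\in\LDer(\B(q))$ and assemble the lemmas of this section case by case. At each step we use that subtracting a derivation from a local derivation gives again a local derivation. So it suffices to reduce $\Delta$ by successive derivations until the remainder is visibly a derivation.

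\emph{Case $q\notin\Z$.}
\begin{enumerate}
\item Lemma~\ref{lem:nonintegral-normalization} gives $u\in R$ and $\kappa\in\C$ such that $T=\Delta-\ad u-\kappa D_1$ is local and satisfies $T(1)=T(t)=0$.
\item Lemma~\ref{lem:nonintegral-action} puts $T$ in the form \eqref{eq:nonintegral-basis}.
\item Lemma~\ref{lem:nonintegral-inner} then gives $T=\ad v$.
\end{enumerate}
Hence $\Delta=\ad(u+v)+\kappa D_1\in\Der(\B(q))$.

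\emph{Case $q\in\Z\setminus\{0\}$.} First reduce to $q>0$ via the isomorphism $\Phi_q$, since conjugation preserves derivations and local derivations.
\begin{enumerate}
\item Lemma~\ref{lem:integral-normalization} gives $D\in\Der(\B(q))$ with $T=\Delta-D$ satisfying $T(1)=T(t)=T(c)=0$.
\item Lemma~\ref{lem:integral-basis} gives \eqref{eq:integral-basis} on all basis elements other than $s$.
\item Lemma~\ref{lem:integral-inner} yields $v$ and $\beta$ with $T=\ad v+\beta D_3$ on $V_q$.
\end{enumerate}
Set $P=T-\ad v-\beta D_3$. This is a local derivation vanishing on $V_q$, so Lemma~\ref{lem:integral-central} gives $P=\delta D_5$. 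Hence $\Delta=D+\ad v+\beta D_3+\delta D_5$ is a derivation.

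\emph{Case $q=0$.} The element $1$ is central, and by \eqref{eq:zero-action} every derivation maps $1$ into $\C1$. Indeed, $[u,1]_0=0$, $E_t1=E_z1=0$ and $D_5(1)=1$. Locality at $1$ therefore gives $\Delta(1)=\delta 1$ for some $\delta\in\C$, and $T=\Delta-\delta D_5$ is a local derivation with $T(1)=0$. Lemma~\ref{lem:zero-basis} gives the form \eqref{eq:zero-vector-form}, and Lemma~\ref{lem:zero-inner} then yields $T=\ad u+\alpha D_1+\beta D_2$. Hence $\Delta=\ad u+\alpha D_1+\beta D_2+\delta D_5\in\Der(\B(0))$.

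The only step not already packaged in a lemma is the normalization at $1$ for $q=0$, which is immediate. The remaining care is in checking that each lemma's hypotheses hold for the reduced map: it must be a local derivation with the stated vanishing conditions. In the integral case, the hypotheses of Lemma~\ref{lem:integral-central} must be checked for $P$. Here $P$ is local as a difference of a local derivation and a derivation, and $P$ vanishes on $V_q$ by Lemma~\ref{lem:integral-inner}.
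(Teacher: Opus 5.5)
Your proposal is correct and follows the paper's proof essentially step for step. You use the same normalizations: $\Delta-\ad u-\kappa D_1$ for $q\notin\Z$; for $q\in\Z\setminus\{0\}$, after reducing to $q>0$, $\Delta-D$ followed by $P=T-\ad v-\beta D_3$; and $\Delta-\delta D_5$ for $q=0$. These are fed into the same chain of lemmas, and your explicit check that $P$ is local and vanishes on $V_q$ before invoking Lemma~\ref{lem:integral-central} matches what the paper does.
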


\begin{proof}
Every derivation is a local derivation.
For the converse, let $\Delta\in\LDer(\B(q))$.

If $q\notin\Z$, Lemma~\ref{lem:nonintegral-normalization}
gives $u\in R$ and $\kappa\in\C$ such that
$T=\Delta-\ad u-\kappa D_1$ satisfies $T(1)=T(t)=0$.
Lemmas~\ref{lem:nonintegral-action} and
\ref{lem:nonintegral-inner} give $T=\ad v$ for some $v\in R$.
Hence $\Delta=\ad(u+v)+\kappa D_1$ is a derivation.

If $q\in\Z\setminus\{0\}$, the isomorphism in
Subsection~\ref{subsec:integral} reduces the proof to $q>0$.
By Lemma~\ref{lem:integral-normalization},
$T=\Delta-D$ satisfies $T(1)=T(t)=T(c)=0$
for some $D\in\Der(\B(q))$.
Lemmas~\ref{lem:integral-basis} and \ref{lem:integral-inner}
give $v\in R$ and $\beta\in\C$ such that
$P=T-\ad v-\beta D_3$ vanishes on $V_q$.
Lemma~\ref{lem:integral-central} gives $P=\delta D_5$.
Therefore $\Delta=D+\ad v+\beta D_3+\delta D_5$
is a derivation.

Finally, let $q=0$.
By \eqref{eq:zero-action}, locality at $1$ gives
$\Delta(1)=\rho1$ for some $\rho\in\C$.
The local derivation $T=\Delta-\rho D_5$ satisfies $T(1)=0$.
Lemmas~\ref{lem:zero-basis} and \ref{lem:zero-inner} yield
$T=\ad u+\alpha D_1+\beta D_2$.
Thus $\Delta=\ad u+\alpha D_1+\beta D_2+\rho D_5$
is a derivation.
\end{proof}

\section{2-Local derivations of
\texorpdfstring{$\B(q)$}{B(q)}}\label{sec:bq-two-local}

In this section, we study 2-local derivations of $\B(q)$
in the three parameter cases.
For a 2-local derivation $T$, let $D_{x,y}$ denote a derivation
satisfying $D_{x,y}(x)=T(x)$ and $D_{x,y}(y)=T(y)$.
Its form is given by Proposition~\ref{prop:derivations},
with coefficients depending on the pair $(x,y)$.

Suppose that $f,g\in R$ have the same value and first
derivatives at $p\in(\C^*)^2$.
When $q\in\Z$, assume also that $\ell_q(f)=\ell_q(g)$.
As in Section~\ref{sec:local}, the derivation formulas give
$D(f-g)(p)=0$ for every $D\in\Der(\B(q))$.
By the linearity of $D_{f,g}$,
\begin{equation}\label{eq:bq-two-local-pointwise}
T(f)(p)-T(g)(p)=D_{f,g}(f-g)(p)=0.
\end{equation}

\subsection{The case
\texorpdfstring{$q\notin\Z$}{q not in Z}}
\label{subsec:bq-two-local-nonintegral}

\begin{lemma}\label{lem:bq-two-local-zero}
Let $q\notin\Z$. If a 2-local derivation $T$ satisfies
$T(t)=T(z)=0$, then $T=0$.
\end{lemma}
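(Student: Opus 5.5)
The plan is to pin down $T(f)$ for a fixed $f\in R$ by comparing the two derivations $D_{f,t}$ and $D_{f,z}$. The conditions $D_{f,t}(t)=0$ and $D_{f,z}(z)=0$ restrict their parameters in \eqref{eq:nonintegral-action}. Any nonexceptional $f$ is then forced to $T(f)=0$ by $z$-degree comparison. The remaining exceptional $f$ are handled with the pointwise identity \eqref{eq:bq-two-local-pointwise}.

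\emph{Step 1.} Write $D_{f,t}=\ad u+d E_t$. From \eqref{eq:laurent-bracket},
\[
D_{f,t}(t)=-(q+1)t\,E_zu+dt .
\]
Grouping by powers of $z$ as in the proof of Lemma~\ref{lem:nonintegral-normalization}, the condition $D_{f,t}(t)=T(t)=0$ gives two facts. The $z^r$ part of $u$ vanishes for $r\ne0$, because $q\ne-1$. The $z^0$ part gives $d=0$. Hence $u=u(t)$, and
\[
T(f)=g(t)\,E_zf,\qquad g=E_tu+qu\in\C[t^{\pm1}].
\]

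\emph{Step 2.} Write $D_{f,z}=\ad w+d'E_t$. Then
\[
D_{f,z}(z)=z\bigl((E_tw+qw)-qE_zw\bigr).
\]
Coefficientwise, $w_{r,k}(k+q-qr)=0$. Since $q\notin\Z$, the factor $k+q-qr$ vanishes only for $(r,k)=(1,0)$, so $w=\beta z$. This gives
\[
T(f)=\beta z\bigl(qE_zf-E_tf-qf\bigr)+d'E_tf .
\]

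\emph{Step 3 (degree comparison).} Let $M$ be the top $z$-degree of $f$, with coefficient $f_M(t)$. In the formula from Step~1, the $z$-support of $T(f)$ lies in degrees $\le M$. In the formula from Step~2, the $z^{M+1}$ coefficient is
\[
\beta\bigl(q(M-1)-E_t\bigr)f_M .
\]
On $t^k$ this operator acts by $q(M-1)-k$, which is nonzero unless $M=1$ and $k=0$.

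So if $f$ is \emph{nonexceptional}, meaning its top component is not of the form $cz$, then $\beta=0$. It follows that $T(f)=d'E_tf=g\,E_zf$. Comparing $z$-components then gives $d'\,E_tf_r=r\,g\,f_r$ for every $r$.

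At the top degree $M$, first suppose $M\ne0$. A Laurent polynomial $g$ with $Mgf_M=d'E_tf_M$ must be a constant, because $t$-degrees must match at both ends of the support. So $g=\gamma$ and $E_tf_M=(M\gamma/d')f_M$, which pins down the combination. In the case $M=0$, $g$ is multiplied by zero, and one uses the remaining relations directly. I expect a short case analysis of this kind to force $T(f)=0$. The cleanest route may be to use the homogeneity of $T$ and to check the two formulas against each other on each $z$-component.

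This finite bookkeeping is routine but is the part I expect to be fiddly. If it resists, I would fall back on Step~4 for \emph{all} $f$, not just exceptional ones, after first establishing $T=0$ on a convenient class.

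\emph{Step 4 (exceptional $f$ via pointwise identity).} Let $f$ have top component $cz$, and fix $p=(a,b)\in(\C^*)^2$. Put
\[
g=f+(z-a)^2 .
\]
Then $f$ and $g$ have the same value and first derivatives at $p$. Since $q\notin\Z$, no $\ell_q$ condition is needed. Hence \eqref{eq:bq-two-local-pointwise} gives $T(f)(p)=T(g)(p)$. The top component of $g$ is $z^2$, so $g$ is nonexceptional and $T(g)=0$ by Step~3. Thus $T(f)$ vanishes on $(\C^*)^2$, and therefore $T(f)=0$.

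The main obstacle is making Step~3 airtight for every nonexceptional $f$, including the $M=0$ and mixed-sign-degree situations. A more uniform alternative is to combine the two derivation formulas evaluated pointwise.
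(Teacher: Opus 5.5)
Your proposal has two genuine gaps: Step 2 fails for rational non-integer $q$, and Step 3 cannot succeed even for irrational $q$.

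\textbf{Step 2 is false for $q\in\mathbb{Q}\setminus\Z$.} The factor $k+q-qr=k-(r-1)q$ vanishes whenever $k=(r-1)q\in\Z$. For $q=a/b$ in lowest terms with $b\ge2$, this happens at every $(r,k)=(1+jb,ja)$, $j\in\Z$. For instance, with $q=\tfrac12$ one has $[L_{3,1},L_{1,0}]_q=0$. So $D_{f,z}(z)=0$ only places $w$ in the span of the $L_{r,k}$ with $k-qr=-q$, and that span has terms of unbounded positive and negative $z$-degree. The formula $T(f)=\beta z(qE_zf-E_tf-qf)+d'E_tf$ and the $z^{M+1}$ comparison of Step 3 are then unavailable. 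Even your claim that $q(M-1)-k\neq0$ away from $(M,k)=(1,0)$ fails, e.g.\ for $q=\tfrac12$, $M=3$, $k=1$.

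The paper never solves this resonance. It keeps only the relations $(k+q-rq)a_{r,k}=0$ and $a_{0,k}=0$. It then separates the terms of $D_{z,g}(g)$ by the weight $i-qm$, which is additive under the bracket and equals $-q$ on the support of $u$. The condition $(Nq+1)((N+1)q+1)\neq0$ excludes exactly the rational values $q=-1/N$ and $q=-1/(N+1)$, where this separation would break.

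\textbf{Step 3 cannot work even for irrational $q$.} The pairs $(f,t)$ and $(f,z)$ simply do not determine $T(f)$. Take $f=zt$. The derivation $\gamma q^{-1}\ad L_{0,0}$ kills $t$, the derivation $\gamma D_1$ kills $z$, and both send $zt$ to $\gamma zt$. So $T(zt)=\gamma zt$ is compatible with every relation you extract, and no case analysis will force $\gamma=0$. The same holds for every $z^mt^i$ with $m,i\neq0$. Since Step 4 gets $T(g)=0$ from Step 3, it is unsupported as written as well.

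The missing ingredient is your fallback made concrete. You need to prove $T=0$ on an explicit family of test elements that can match the value and first derivatives of any $f$ at any $p$. Then apply \eqref{eq:bq-two-local-pointwise} to \emph{every} $f$, not only the exceptional ones. The paper uses the family $g=Az^N+Bt+C$.

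For irrational $q$, your Steps 1--2 would already suffice with $g=f+z^K(z-a)^2$, where $K\ge0$ exceeds the top $z$-degree of $f$:
\begin{itemize}
\item the $z^{K+3}$ coefficient gives $\beta q(K+1)=0$, so $\beta=0$;
\item the $z^{K+2}$ coefficient then forces $E_tu+qu=0$, so $T(g)=0$.
\end{itemize}
This still leaves the rational case open.
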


\begin{proof}
For $f\in R$, write $D_{t,f}=\ad u+\alpha D_1$.
Comparing $z$-coefficients in
\begin{equation*}
0=D_{t,f}(t)=-(q+1)tE_zu+\alpha t
\end{equation*}
gives $\alpha=0$ and $E_zu=0$. Hence
\begin{equation}\label{eq:bq-two-local-profile}
T(f)=F_f(t)E_zf,
\qquad F_f=(E_t+q)u\in\C[t^{\pm1}].
\end{equation}

Choose an integer $N\ge2$ such that
$(Nq+1)((N+1)q+1)\ne0$, and put $g=Az^N+Bt+C$,
where $A,B,C\in\C$.
If $A=0$, then \eqref{eq:bq-two-local-profile} gives $T(g)=0$.
For $A\ne0$, write $D_{z,g}=\ad u+\alpha D_1$,
where $u=\sum_{r,k}a_{r,k}L_{r,k}$.
Since
\begin{equation*}
0=D_{z,g}(z)
=\sum_{r,k}(k+q-rq)a_{r,k}L_{r+1,k},
\end{equation*}
we have
\begin{equation}\label{eq:bq-two-local-resonance}
(k+q-rq)a_{r,k}=0,
\qquad r,k\in\Z.
\end{equation}
In particular, $a_{0,k}=0$ for all $k$. Consequently,
\begin{equation*}
\begin{aligned}
T(g)
&=ANz^NF_g(t)\\
&=Aq(N-1)\sum_{r,k}ra_{r,k}L_{r+N,k}
  -B(q+1)\sum_{r,k}ra_{r,k}L_{r,k+1}\\
&\quad-Cq\sum_{r,k}ra_{r,k}L_{r,k}+\alpha Bt.
\end{aligned}
\end{equation*}
The four terms on the right have the following respective
values of $i-qm$:
\begin{equation*}
\begin{aligned}
k-q(r+N)&=-(N+1)q,&
(k+1)-qr&=1-q,\\
k-qr&=-q,&
1-q\cdot0&=1.
\end{aligned}
\end{equation*}
Since $-(N+1)q\notin\{1-q,-q,1\}$ and every nonzero term
of the first sum has $r+N\ne N$, comparison with
$ANz^NF_g(t)$ gives $Aq(N-1)ra_{r,k}=0$.
Thus $u=0$, and comparison of $z$-degrees gives
$T(g)=\alpha Bt=0$.

Finally, for $f\in R$ and $p=(a,b)\in(\C^*)^2$, set
\begin{equation*}
A=\frac{f_z(p)}{Na^{N-1}},\qquad
B=f_t(p),\qquad C=f(p)-Aa^N-Bb.
\end{equation*}
Then $g(p)=f(p)$, $g_z(p)=f_z(p)$ and $g_t(p)=f_t(p)$.
By \eqref{eq:bq-two-local-pointwise}, $T(f)(p)=T(g)(p)=0$.
As $p$ is arbitrary, $T(f)=0$, proving the lemma.
\end{proof}

\subsection{The case
\texorpdfstring{$q\in\Z\setminus\{0\}$}{q in Z, q nonzero}}
\label{subsec:bq-two-local-integral}

The isomorphism $\Phi_q:\B(q)\to\B(-q)$ from
Subsection~\ref{subsec:integral} preserves 2-local derivations
under conjugation. It therefore suffices to consider $q>0$.

\begin{lemma}\label{lem:bq-two-local-integral-zero}
Let $q$ be a positive integer and put $s=t^{-2q}$.
If a 2-local derivation $T$ satisfies $T(s)=T(z)=0$,
then $T=0$.
\end{lemma}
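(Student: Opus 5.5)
We would follow the pattern of Lemma~\ref{lem:bq-two-local-zero}: first use the vanishing at $s$ to pin down the shape of $T(f)$ for every $f$, then use the vanishing at $z$ together with a family of test elements $g$ to show $T(g)=0$, and finally pass from the test elements to an arbitrary $f$ via the pointwise identity \eqref{eq:bq-two-local-pointwise}. Throughout, $q>0$ and every derivation has the form \eqref{eq:integral-action}.

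\emph{Step 1: the profile coming from $s$.} For $f\in R$ write $D_{s,f}=\ad u+\alpha D_1+\beta D_3+\gamma D_4+\delta D_5$. A direct computation gives
\begin{equation*}
0=D_{s,f}(s)=q t^{-2q}E_zu-2q\alpha t^{-2q}-q\gamma t^{-3q}+\delta t^{-q}.
\end{equation*}
Comparing $z$-degrees gives $E_zu=0$. The three distinct $t$-powers then give $\alpha=\gamma=\delta=0$. Hence
\begin{equation*}
T(f)=F_f(t)\,E_zf,\qquad F_f=(E_t+q)u+\beta t^{-q}\in\C[t^{\pm1}],
\end{equation*}
the analogue of \eqref{eq:bq-two-local-profile}.

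\emph{Step 2: test elements.} Fix a large integer $N\ge2$ and put
\begin{equation*}
g=Az^N+Bt+C+Es+(t-b)^2,
\end{equation*}
with $A\neq0$. The extra summand $(t-b)^2$ has zero value and zero first derivatives at any point $p=(a,b)$, and it contributes nothing to $\ell_q$. By Step 1, $T(g)=ANz^NF_g(t)$ is concentrated in $z$-degree $N$.

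Now write $D_{z,g}=\ad u+\alpha D_1+\beta D_3+\gamma D_4+\delta D_5$ with $u=\sum a_{r,k}L_{r,k}$. The condition $D_{z,g}(z)=0$ gives two things:
\begin{itemize}
\item $(k+q-rq)a_{r,k}=0$ for $(r,k)\ne(0,-q)$, so $u$ is supported on elements of weight $i-qm=-q$;
\item from the coefficient of $zt^{-q}$, the relation $\beta+q\gamma=0$.
\end{itemize}

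We then expand $D_{z,g}(g)$ and sort the terms by the additive grading $i-qm$ together with $z$-degree. The weights involved are:
\begin{itemize}
\item $-q(N+1)$ for $\ad u(z^N)$ and for the $\beta,\gamma$ terms on $z^N$;
\item $1-q$, $-q$, $-3q$, $2-q$ for the action of $\ad u$, $D_3$ and $D_4$ on $t$, $1$, $s$, $t^2$;
\item $1$, $2$, $-2q$ for the $\alpha$ terms;
\item $-q$ for the $\delta$ term.
\end{itemize}
For $N$ large, $-q(N+1)$ is separated from all the other weights. Matching against $ANz^NF_g(t)$ then does three things:
\begin{itemize}
\item the terms of $\ad u(z^N)$ in $z$-degree $r+N$ with $r\ne0$ must vanish, which forces $a_{r,k}=0$ for $r\ne0$;
\item the $z$-degree-zero part must vanish; the $t^{2-q}$ coefficient $(q+2)\gamma$ (coming from the $(t-b)^2$ summand, which is why we added it) forces $\gamma=0$, and then the remaining $t$, $t^2$ and $t^{-q}$ coefficients kill $\alpha$ and $\delta$;
\item $\beta=-q\gamma=0$, and the remaining $a_{0,-q}L_{0,-q}$ is central.
\end{itemize}
Hence $D_{z,g}(g)=0$, that is, $T(g)=0$.

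\emph{Step 3: pointwise conclusion.} Given $f$ and $p=(a,b)$, choose
\begin{equation*}
A=\frac{f_z(p)}{Na^{N-1}},\qquad B=f_t(p),\qquad E=\ell_q(f),
\end{equation*}
and $C$ so that $g(p)=f(p)$. If $f_z(p)=0$ we cannot take $A=0$ directly. Instead we use $A\ne0$ together with a compensating term such as $-A(z-a)^2z^{N-2}\cdot a^{2-N}$-type corrections, or we simply treat $A=0$ via Step 1 exactly as in the non-integral case. Then \eqref{eq:bq-two-local-pointwise} gives $T(f)(p)=T(g)(p)=0$ for all $p$, so $T(f)=0$.

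\emph{Main obstacle.} We expect the hardest part to be the weight bookkeeping in Step 2. The maps $D_3$ and $D_4$ both lower the weight by $q$ and act nontrivially on $z^N$, and $D_5$ interacts with $s$. For this reason $N$ must be chosen so that no accidental coincidences of weights occur, and the $(t-b)^2$ summand is needed so that $\gamma$ is forced to vanish even when $B=C=E=0$.
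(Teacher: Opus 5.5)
Your proposal is essentially the paper's proof. It uses the same profile $T(f)=F_f(t)E_zf$ obtained from $D_{s,f}$ and the same test family; the paper takes $N=3$ and the summand $t^2$ rather than $(t-b)^2$, which merely reparametrizes $B$ and $C$. It also uses the same isolation of the weight $-q(N+1)$ to make $u$ central, and the same pointwise finish. There $A=0$ is handled directly by the profile because $E_zg=0$; this is your second option, and the compensating-term idea is not needed.

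Three details as written are wrong, though each is easily repaired.
\begin{enumerate}
\item In Step~3, the choice $B=f_t(p)$ does not give $g_t(p)=f_t(p)$. The reason is that $\partial_t(Es)=-2qEt^{-2q-1}$ does not vanish at $b$. You need $B=f_t(p)+2q\,\ell_q(f)\,b^{-2q-1}$, which is the correction appearing in the paper's choice of $B$.
\item In Step~2, when $q=1$ the power $t^{2-q}$ equals $t$. The $z^0$ coefficient there is then $\alpha(B-2b)+3\gamma$ rather than $(q+2)\gamma$. You must first use the $t^2$ coefficient $2\alpha$, which has no competitor, to get $\alpha=0$. Only then can you read off $\gamma=0$ from $t^{2-q}$; this is exactly the order used in the paper.
\item ``Large'' must mean $N\ge3$, not $N\ge2$. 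For $N=2$ the weight $-3q$ of the $z^N$ terms coincides with that of $\ad u(Es)$ and $D_4(Es)$, and the separation argument breaks down.
\end{enumerate}
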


\begin{proof}
Put $c=t^{-q}$. For $f\in R$, write
$D_{s,f}=\ad u+\alpha D_1+\beta D_3+\gamma D_4+\delta D_5$.
Then
\begin{equation*}
0=D_{s,f}(s)=qsE_zu-2q\alpha s-q\gamma t^{-3q}+\delta c.
\end{equation*}
Separating the $z$-degrees and the three distinct powers of
$t$ gives $E_zu=0$ and $\alpha=\gamma=\delta=0$.
Thus \eqref{eq:bq-two-local-profile} holds, now with
$F_f=(E_t+q)u+\beta c$.

Consider
\begin{equation*}
g=Az^3+\lambda s+Bt+C+t^2,
\qquad A,\lambda,B,C\in\C.
\end{equation*}
If $A=0$, then \eqref{eq:bq-two-local-profile} gives $T(g)=0$.
For $A\ne0$, write
$D_{z,g}=\ad u+\alpha D_1+\beta D_3+\gamma D_4+\delta D_5$,
where $u=\sum_{r,k}a_{r,k}L_{r,k}$.
The equality
\begin{equation*}
0=D_{z,g}(z)
=\sum_{r,k}(k+q-rq)a_{r,k}L_{r+1,k}
  +(\beta+q\gamma)L_{1,-q}
\end{equation*}
gives \eqref{eq:bq-two-local-resonance} and
$\beta+q\gamma=0$.

As in the preceding proof, compare $D_{z,g}(g)$ with
$T(g)=3Az^3F_g(t)$.
The terms arising from $Az^3$ with first index different
from $3$ are
\begin{equation*}
2Aq\sum_{\substack{r,k\\r\ne0}}ra_{r,k}L_{r+3,k},
\qquad k-q(r+3)=-4q.
\end{equation*}
All remaining terms with first index different from $3$
have
\begin{equation*}
i-qm\in\{-3q,-2q,-q,1-q,2-q,1,2\}.
\end{equation*}
Since $q>0$, none of these values is $-4q$.
Coefficient comparison gives $a_{r,k}=0$ for $r\ne0$;
then \eqref{eq:bq-two-local-resonance} gives
$u=a_{0,-q}c$, which is central.

Using $\beta=-q\gamma$, we obtain
\begin{equation*}
\begin{aligned}
T(g)={}&-2Aq\gamma z^3c
 +\alpha(-2q\lambda s+Bt+2t^2)\\
&+\gamma\bigl(-q\lambda t^{-3q}+(q+1)Bt^{1-q}
              +qCc+(q+2)t^{2-q}\bigr)
 +\delta\lambda c.
\end{aligned}
\end{equation*}
Comparing the $z^0$ coefficients at $t^2$ and then
$t^{2-q}$ gives
\begin{equation*}
2\alpha=0,\qquad (q+2)\gamma=0,\qquad \beta=-q\gamma=0.
\end{equation*}
Hence $T(g)=\delta\lambda c=0$ by
\eqref{eq:bq-two-local-profile}.

For $f\in R$ and $p=(a,b)\in(\C^*)^2$, choose
\begin{equation*}
\begin{aligned}
\lambda&=\ell_q(f),\qquad A=\frac{f_z(p)}{3a^2},\\
B&=f_t(p)+2q\lambda b^{-2q-1}-2b,\\
C&=f(p)-Aa^3-\lambda b^{-2q}-Bb-b^2.
\end{aligned}
\end{equation*}
Then $g$ and $f$ have the same value and first derivatives
at $p$, and $\ell_q(g)=\ell_q(f)$.
Equation~\eqref{eq:bq-two-local-pointwise} gives
$T(f)(p)=T(g)(p)=0$, and therefore $T(f)=0$.
\end{proof}

\subsection{The case \texorpdfstring{$q=0$}{q = 0}}
\label{subsec:bq-two-local-zero}

\begin{lemma}\label{lem:bq-two-local-zero-linearity}
Every 2-local derivation on $\B(0)$ is linear.
\end{lemma}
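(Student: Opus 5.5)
Since 2-local derivations are already known to be homogeneous, it is enough to show additivity: $T(f+g)=T(f)+T(g)$ for all $f,g\in R$. For $q=0$, every derivation has the form \eqref{eq:zero-action}. The pointwise principle \eqref{eq:bq-two-local-pointwise} shows that $T(f)(p)$ depends only on $\ell_0(f)$ and on $(E_zf)(p)$, $(E_tf)(p)$. Indeed, the derivation formula for $q=0$ involves no zeroth-order term except $\ell_0$, so the condition $f(p)=g(p)$ can be dropped. This suggests proving a representation
\[
T(f)=A\,E_zf+B\,E_tf+\rho\,\ell_0(f)1,\qquad f\in R,
\]
with fixed $A,B\in R$ and $\rho\in\C$. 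Any map of this form is visibly linear.

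\textbf{Step 1: the central part.} I would first treat $\ell_0(f)$. Applying 2-locality to $(1,f)$ gives $T(1)=\delta_{1,f}1$, because $[u,1]_0=0$ and $D_1(1)=D_2(1)=0$. Hence $T(1)=\rho1$ for a fixed $\rho$. Since $T-\rho D_5$ is again 2-local, we may assume $T(1)=0$. Then for any $f$, the pair $(f,f-\ell_0(f)1)$ shows that the two values differ by $D(\ell_0(f)1)$, which is a multiple of $1$. I will handle this multiple through the evaluation argument below instead of computing it directly.

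\textbf{Step 2: pointwise reduction to linear test elements.} Fix $p=(a,b)\in(\C^*)^2$ and $f\in R$. Put $g=f_z(p)z+f_t(p)t+\ell_0(f)1$. Then $f-g$ has vanishing first derivatives at $p$ and $\ell_0(f-g)=0$. Every derivation \eqref{eq:zero-action} applied to $f-g$ vanishes at $p$, so \eqref{eq:bq-two-local-pointwise}, in its version without the value condition, gives
\[
T(f)(p)=T\bigl(\alpha z+\beta t+\mu1\bigr)(p)
\]
with $\alpha=f_z(p)$, $\beta=f_t(p)$ and $\mu=\ell_0(f)$. It therefore suffices to show that the map $(\alpha,\beta,\mu)\mapsto T(\alpha z+\beta t+\mu1)$ on this three-dimensional subspace $W$ is linear, namely equal to $\alpha T(z)+\beta T(t)$. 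Once that holds, $T(f)(p)=f_z(p)T(z)(p)+f_t(p)T(t)(p)$ for all $p$, and hence $T(f)=T(z)\partial_zf+T(t)\partial_tf$ is linear.

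\textbf{Step 3: linearity on $W$, the main obstacle.} First, adding $\mu1$ does not change $T$. Using the pair $(x,x+\mu1)$ with $x=\alpha z+\beta t$, the derivation $D$ satisfies $T(x+\mu1)-T(x)=D(\mu1)=\mu\delta_D1$. A multiple of $1$ is a $z^0t^0$ term, and I would exclude it by comparing with the evaluation principle at varying $p$; this needs care. Next, I would show $T(\alpha z+\beta t)=\alpha T(z)+\beta T(t)$. The plan is to subtract an inner derivation plus $D_1,D_2$ so that $T(z)=T(t)=0$, mirroring Lemma~\ref{lem:bq-two-local-zero}. That is possible because $T(z)$ and $T(t)$ are realized simultaneously by $D_{z,t}$. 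The pairs $(z,x)$ and $(t,x)$ then force $D_{z,x}$ to kill $z$ and $D_{t,x}$ to kill $t$.

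Killing $z$ means $E_tu\,\cdot z=-\beta' z$ up to the Euler terms. Concretely, $u=\sum a_{r,k}L_{r,k}$ must satisfy $k\,a_{r,k}=0$ apart from adjustments by $D_1,D_2$, so $u\in\C[z^{\pm1}]$. Symmetrically, the pair with $t$ forces the relevant $u$ to lie in $\C[t^{\pm1}]$. Combining the two descriptions of $T(x)$, from $D_{z,x}$ and $D_{t,x}$, by coefficient comparison in $z$- and $t$-degree should leave only a constant $u$, giving $T(x)=0$. The delicate point is that the two derivations differ, so I must compare the explicit forms $\beta E_z$-type terms $\beta(\alpha z)$ against $\alpha'(\beta t)$. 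I expect this to give zero when $\alpha\beta\ne0$, with the remaining cases immediate.

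If this direct comparison gets stuck, I would fall back on a test element with higher powers, as in Lemma~\ref{lem:bq-two-local-zero}, using $g=Az^N+Bt^M+C$ with suitable $N,M\ge2$. I would then match first derivatives at $p$ with the coefficients $A$ and $B$.
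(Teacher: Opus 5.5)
The gap is in Step~3. It is structural rather than a matter of care: the comparison you propose cannot give $T(x)=0$. After normalizing $T(1)=T(z)=T(t)=0$, take $x=\alpha z+\beta t$. The pair $(z,x)$ forces $D_{z,x}$ to have no $D_2$-component and inner part $\ad u$ with $u\in\C[z^{\pm1}]$. Hence $T(x)=\beta t\,(\alpha'-E_zu)$, where $\alpha'$ is the $D_1$-coefficient. Symmetrically, the pair $(t,x)$ gives $T(x)=\alpha z\,(E_tu'+\beta')$ with $u'\in\C[t^{\pm1}]$ and $\beta'$ the $D_2$-coefficient. Comparing monomials leaves only $T(x)=c\,zt$, and nothing constrains $c$: $\ad(-(c/\beta)z)$ kills $z$, $\ad((c/\alpha)t)$ kills $t$, and both send $x$ to $c\,zt$. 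In fact $\ad(\lambda t-\nu z)$ sends $\alpha z+\beta t+\mu1$ to $(\alpha\lambda+\beta\nu)zt$. So the assignment $\alpha z+\beta t+\mu1\mapsto c(\alpha,\beta)zt$, with $c$ any function homogeneous of degree one and vanishing on the axes, is compatible with every pair of elements of $W$. Thus no comparison using pairs from $W$ alone can prove linearity on $W$. Your fallback has the same defect. For $g=Az^N+Bt^M+C$, the pairs $(z,g)$ and $(t,g)$ only give $T(g)\in\C z^Nt^M$, because $\ad z^N$ centralizes $z$ and $\ad t^M$ centralizes $t$. The separation by $i-qm$ that made Lemma~\ref{lem:bq-two-local-zero} work degenerates at $q=0$, and matching first derivatives at $p$ is useless until $T(g)=0$ is known.

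The missing idea is to use Step~2 in the opposite direction: relate a nonlinear test element back to $W$ at every point, and play this against homogeneity. For instance, with $g=z^2+t^2$ the two pairs give $T(g)=\kappa z^2t^2$. Step~2 at $p=(a,b)\in(\C^*)^2$ gives $T(g)(p)=T(2az+2bt)(p)=2ab\,c(a,b)$. Thus $c(a,b)=\kappa ab/2$ on $(\C^*)^2$, and since $c$ is homogeneous of degree one, $\kappa=0$ and $c\equiv0$. The constant term is also easier than you suggest, and evaluation is not the right tool there. Once $T(1)=0$, every $D_{1,f}$ has no $D_5$-component, so $T(f)$ has no $L_{0,0}$-term; the pair $(f,f-\ell_0(f)1)$ then gives $T(f)=T(f-\ell_0(f)1)$. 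With these two additions your route would show directly that the normalized $T$ vanishes, i.e.\ that $\Delta$ is a derivation. The paper takes a shorter path to linearity. Every derivation preserves $G=\operatorname{span}\{L_{m,i}:(m,i)\ne(0,0)\}$ and is skew for the nondegenerate form $\langle x,y\rangle=\ell_0(xy)$ on $G$. Hence $\langle\Delta(x),y\rangle=-\langle x,\Delta(y)\rangle$, which yields additivity on $G$ without any test elements.
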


\begin{proof}
Let $\Delta$ be a 2-local derivation, and put
$c=L_{0,0}$ and
$G=\operatorname{span}\{L_{m,i}:(m,i)\ne(0,0)\}$.
Then $\B(0)=G\oplus\C c$.
By Proposition~\ref{prop:derivations}, every derivation
preserves both summands; hence $\Delta(G)\subseteq G$
and $\Delta(c)=\rho c$ for some $\rho\in\C$.
The symmetric bilinear form $\langle x,y\rangle=\ell_0(xy)$
on $G$ is nondegenerate, since
\begin{equation*}
\langle L_{m,i},L_{n,j}\rangle
=\delta_{m+n,0}\delta_{i+j,0}.
\end{equation*}

For $x,y\in G$, direct calculation gives
\begin{equation*}
\begin{aligned}
\langle[u,x]_0,y\rangle+\langle x,[u,y]_0\rangle
&=\ell_0([u,xy]_0)=0,\\
\langle E_\nu x,y\rangle+\langle x,E_\nu y\rangle
&=\ell_0(E_\nu(xy))=0,\qquad \nu=z,t.
\end{aligned}
\end{equation*}
Since $D_5|_G=0$, every derivation is skew with respect
to this form. Applying this to $D_{x,y}$ yields
\begin{equation*}
\langle\Delta(x),y\rangle=-\langle x,\Delta(y)\rangle.
\end{equation*}
Thus, for $x,y,z\in G$,
\begin{equation*}
\begin{aligned}
\langle\Delta(x+y),z\rangle
&=-\langle x+y,\Delta(z)\rangle\\
&=\langle\Delta(x)+\Delta(y),z\rangle.
\end{aligned}
\end{equation*}
Nondegeneracy gives additivity, and homogeneity gives
linearity on $G$.

For $f=x+\lambda c$, with $x\in G$, the derivations
$D_{f,x}$ and $D_{f,c}$ determine the $G$ and central
components of $\Delta(f)$, respectively. Hence
\begin{equation*}
\Delta(x+\lambda c)=\Delta(x)+\lambda\rho c,
\qquad x\in G,\quad\lambda\in\C.
\end{equation*}
Therefore $\Delta$ is linear on $\B(0)$.
\end{proof}

The three cases give the following theorem.

\begin{theorem}\label{thm:bq-two-local-main}
For every $q\in\C$, every 2-local derivation on $\B(q)$
is a derivation.
\end{theorem}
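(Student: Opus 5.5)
The proof will reduce, in each of the three parameter cases, to the vanishing lemmas already established, by subtracting a suitable derivation. Let $\Delta$ be a 2-local derivation on $\B(q)$.

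\emph{Case $q\notin\Z$.} Take the derivation $D_{t,z}$ with $D_{t,z}(t)=\Delta(t)$ and $D_{t,z}(z)=\Delta(z)$. Then $T=\Delta-D_{t,z}$ is again a 2-local derivation, as noted in Section~\ref{sec:preliminaries}, and it satisfies $T(t)=T(z)=0$. Lemma~\ref{lem:bq-two-local-zero} gives $T=0$, so $\Delta=D_{t,z}\in\Der(\B(q))$.

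\emph{Case $q\in\Z\setminus\{0\}$.} Using the isomorphism $\Phi_q$, whose conjugation preserves derivations and 2-local derivations, we may assume $q>0$. With $s=t^{-2q}$, put $T=\Delta-D_{s,z}$. Then $T(s)=T(z)=0$, and Lemma~\ref{lem:bq-two-local-integral-zero} gives $T=0$. Hence $\Delta=D_{s,z}$ is a derivation, and conjugating back by $\Phi_q^{-1}$ handles $q<0$.

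\emph{Case $q=0$.} By Lemma~\ref{lem:bq-two-local-zero-linearity}, $\Delta$ is linear. A linear 2-local derivation is in particular a local derivation, since $\Delta(x)=D_{x,x}(x)$. Theorem~\ref{thm:main} then shows that $\Delta\in\LDer(\B(0))=\Der(\B(0))$.

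There is no real obstacle at this stage, because the substantive work sits in the three lemmas. The only points to check are these. First, subtracting a fixed derivation preserves the 2-local property, which holds because $D_{x,y}-D$ is a derivation. Second, the $q=0$ case cannot be handled by a vanishing lemma of the same shape, so it genuinely relies on the linearity lemma together with the local-derivation theorem.
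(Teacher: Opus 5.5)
Your proposal is correct and follows the paper's proof exactly: for $q\notin\Z$ you subtract $D_{t,z}$, for nonzero integral $q$ you reduce to $q>0$ via $\Phi_q$ and subtract $D_{s,z}$, and for $q=0$ you combine Lemma~\ref{lem:bq-two-local-zero-linearity} with Theorem~\ref{thm:main}. Your closing remark that the $q=0$ case \emph{cannot} be handled by a vanishing lemma of the same shape is neither justified nor needed; the paper simply chooses the linearity route there.
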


\begin{proof}
Let $\Delta$ be a 2-local derivation on $\B(q)$.
If $q\notin\Z$, put $T=\Delta-D_{t,z}$.
Then $T(t)=T(z)=0$, so Lemma~\ref{lem:bq-two-local-zero}
gives $T=0$ and $\Delta=D_{t,z}$.

For $q\in\Z\setminus\{0\}$, use the reduction in
Subsection~\ref{subsec:bq-two-local-integral} to assume $q>0$.
Put $s=t^{-2q}$ and $T=\Delta-D_{s,z}$.
Since $T(s)=T(z)=0$, Lemma~\ref{lem:bq-two-local-integral-zero}
gives $T=0$ and $\Delta=D_{s,z}$.

If $q=0$, Lemma~\ref{lem:bq-two-local-zero-linearity}
shows that $\Delta$ is linear.
It is therefore a local derivation, and Theorem~\ref{thm:main}
completes the proof.
\end{proof}

\end{document}